\documentclass[final,twoside,11pt]{entics}
\usepackage{enticsmacro}
\usepackage{graphicx}
\usepackage[all]{xy}
\usepackage{float}
\sloppy


\volume{2}			


\begin{document}

\begin{frontmatter}
  \title{Strongly Continuous Domains\thanksref{ALL}}
  \author{Yinglong Song\thanksref{a}\thanksref{b}\thanksref{myemail}}	
   \author{Jinbo Yang\thanksref{a}\thanksref{b}\thanksref{coemail}}		
   \address[a]{School of Mathematics and Statistics, Jiangxi Normal University,				
    Nanchang, Jiangxi 330022, People's Republic of China}
   \address[b]{Jiangxi Provincial Center for Applied Mathematics, Jiangxi Normal University, Nanchang, Jiangxi 330022, People's Republic of China}

  \thanks[ALL]{Supported by the NSF of China (12071188, 11361028, 11671008) and Science and Technology Project from Jiangxi Education Department (GJJ150344)}   
   \thanks[myemail]{Email: \href{mailto:YLSong1997@163.com} {\texttt{\normalshape
        YLSong1997@163.com}}}

  \thanks[coemail]{Email:  \href{mailto:jbyang73@163.com} {\texttt{\normalshape
        jbyang73@163.com}}}

\begin{abstract}
 Strong Scott topology introduced by X. Xu and D. Zhao is a kind of new topology which is finer than upper topology and coarser than Scott topology. Inspired by the topological characterizations of continuous domains and hypercontinuous domains, we introduce the concept of strongly continuous domains and investigate some properties of strongly continuous domains. In particular, we give the definition of strong way-below relation and obtain a characterization of strongly continuous domains via the strong way-below relation. We prove that the strong way-below relation on a strongly continuous domain satisfies the interpolation property, and clarify the relationship between strongly continuous domains and continuous domains, and the relationship between strongly continuous domains and hypercontinuous domains. We discuss the properties of strong Scott topology and strong Lawson topology, which is the common refinement of the strong Scott topology and the lower topology, on a strongly continuous domain.
\end{abstract}
\begin{keyword}
  strongly continuous domains, strong way-below relation, strong Scott topology, strong Lawson topology
\end{keyword}
\end{frontmatter}
\section{Introduction}
In non-Hausdorff topology and domain theory, $d$-spaces, well-filtered spaces and sober spaces form three of the most important classes of $T_0$ spaces. In the past few years, some remarkable progresses have been achieved in the research of these three kinds of spaces(see, e.g., \cite{XSXZ,XXZ,XZ,XZ2,XYZ}). In order to reveal finer links between well-filtered spaces and $d$-spaces, X. Xu and D. Zhao introduced another class of $T_0$ spaces--\emph{strong $d$-spaces} which lie between $d$-spaces and $T_1$ spaces in \cite{XZ}. It is well-known that a $T_0$ space $X$ is a $d$-space iff $X$ is a \emph{directed complete poset} (\emph{dcpo}, for short) and the topology on $X$ is coarser than the Scott topology on $X$ with respect to its \emph{specialization order}. In order to obtain some characterizations of strong $d$-spaces (\cite[Proposition 3.22]{XZ}), the authors \cite{XZ} introduced the concept of \emph{strong Scott topology} and showed that for a $T_0$ space $X$,  if $X$ is a strong $d$-space, then $X$ is a $d$-space and the topology on $X$ is coarser than the strong Scott topology on $X$, and the converse is true if $X$ is a sup semilattice with the specialization order.

Strong Scott topology is a kind of new topology which is finer than the upper topology and coarser than the Scott topology. Inspired by the topological characterizations of continuous domains (resp., hypercontinuous domains) that a dcpo $P$ is a continuous domain (resp., a hypercontinuous domain) if and only if $P$ is a $C$-space with respect to the Scott topology (resp., upper topology), we introduce the concept of \emph{strongly continuous domains}. That is to say, a dcpo $P$ is called a strongly continuous domain if $P$ is a $C$-space with respect to the strong Scott topology. In this paper, we will investigate the properties of strongly continuous domains from the aspects of order structure and topology structure. In particular, we introduce the notion of \emph{strong way-below relation} and obtain a characterization of strongly continuous domains via this relation. We prove that the strong way-below relation on a strongly continuous domain satisfies the interpolation property and clarify the relationship between strongly continuous domains and continuous domains, and the relationship between strongly continuous domains and hypercontinuous domains. We prove that a strongly continuous domain $P$ endowed with the strong Scott topology is a locally compact sober space, and $P$ is a $T_{2}$ space with respect to the \emph{strong Lawson topology} which is the common refinement of the strong Scott topology and the lower topology.

\section{Preliminaries}

We now recall some basic definitions and notations needed in this paper. For further details, we refer the reader to \cite{GHK,GL}.
For a set $X$, the family of all finite sets in $X$ is denoted by $X^{(<\omega)}$. For a poset $L$, if $L$ has a greatest element (resp., smallest element), it is called the unit or top element (resp., zero or bottom element) of $L$ and is written as $1$ (resp., $0$). For any $x\in L$ and $A\subseteq L$, let $\downarrow x=\{y\in L: y\leq x\}$, $\downarrow \!A=\bigcup\{\downarrow \!x: x\in A\}$; $\uparrow \!x$ and $\uparrow \!A$ are defined dually. A subset $A$ is called a \emph{lower set} (resp., an \emph{upper set}) if $A=\ \downarrow \!A$ (resp., $A=\ \uparrow \!A$). A nonempty subset $D$ of $L$ is called \emph {directed} if every two elements in $D$ have an upper bound in $D$. $L$ is called a \emph{directed complete poset}, or \emph{dcpo} for short, if every directed subset of $L$ has the least upper bound in $L$. A subset $I\subseteq L$ is called an \emph{ideal} of $L$ if $I$ is a directed and a lower set. Dually, we define the concept of \emph{filters}. $L$ is called an (inf) \emph{semilattice} if any two elements $a, b$ have an inf, denoted by $a\wedge b$. Dually, $L$ is a \emph{sup semilattice} if any two elements $a,b$ have a sup, denoted by $a\vee b$. $L$ is called a \emph{complete semilattice} if $L$ is a dcpo and every nonempty subset of $L$ has an inf. $L$ is called a \emph{complete lattice} if every subset of $L$ has a sup and an inf. The \emph{upper topology} on $L$, generated by $\{L\}\cup\{L\setminus\!\downarrow \!x: x\in L\}$ (as a subbase), is denoted by $\upsilon(L)$. Dually, we define the \emph{lower topology} on $L$ and denote it by $\omega(L)$. A subset $U$ of $L$ is called \emph{Scott open} if $U$ is an upper set and for any directed subset $D$ for which $\bigvee D$ exists, $\bigvee D\in U$ implies $D\cap U\neq\emptyset$. All Scott open subsets of $L$ form a topology, and we call this topology the \emph{Scott topology} on $L$ and denote it by $\sigma(L)$. The complement of a Scott open set is called \emph{Scott closed}.

For a topological space $X$, let $\mathcal{O}(X)$ (resp., $\mathcal{C}(X)$) be the lattice of all open subsets (closed subsets) in $X$. For $A\subseteq X$, the \emph{closure} (resp., \emph{interior}) of $A$ in $X$ is denoted by cl$_XA$ (resp., int$_XA$). For a $T_0$ space $X$, we use $\leq_{X}$ to represent the \emph{specialization order} of $X$, that is , $x\leq_{X}y$ if and only if $x\in {\rm cl}_X(\{y\})$. Unless otherwise stated, throughout the paper, whenever an order-theoretic concept is mentioned, it is to be interpreted with respect to the specialization order.

\begin{definition}{\rm(\cite{GHK})} Let $L$ be a poset.
\begin{enumerate}
\item[\rm{(1)}] We say that $x$ is \emph{way below} $y$, in symbols $x\ll y$, iff for any directed subset $D\subseteq L$ for which $\sup D$ exists, $y\leq \sup D$ implies the existence of a $d\in D$ with $x\leq d$.
\item[\rm{(2)}] $L$ is called \emph{continuous}, if for any $x\in L, \Downarrow x=\{u\in L: u\ll x\}$ is directed and $x=\bigvee\Downarrow x$. A dcpo which is continuous is called a \emph{continuous domain}, or \emph{domain}. A domain which is a complete lattice is called a \emph{continuous lattice}.
\item[\rm{(3)}] We say that $x\prec y$ iff $y\in$ int$_{\upsilon(L)}\!\uparrow \!x$. And $L$ is called \emph{hypercontinuous} if for any $x\in L, \{y\in L: y\prec x\}$ is directed and $x=\bigvee\{y\in L: y\prec x\}$. A dcpo which is hypercontinuous is called a \emph{hypercontinuous domain}. A hypercontinuous domain which is a complete lattice is called a \emph{hypercontinuous lattice}.
\end{enumerate}
\end{definition}

\begin{definition}{\rm(\cite{VP})} Let $L$ be a complete lattice. A binary relation $\lhd$ on $L$ is defined as follows: $x\lhd y$ iff for any $S\subseteq L, y\leq\bigvee S$ implies that $x\in\ \!\downarrow \!S$. $L$ is a \emph{prime continuous lattice} if for all $x\in L, x=\bigvee\{y\in L: y\lhd x\}$.
\end{definition}

\begin{definition}{\rm(\cite{ZH})}
Let $X$ be a $T_{0}$ space. $X$ is called a $C$-space if for any $U\in\mathcal{O}(X)$ and $x\in U$, there exists $y\in U$ such that $x\in\!$ int$_{X}\!\uparrow \!y\subseteq\ \!\uparrow \!y\subseteq U$.
\end{definition}

\begin{proposition}{\rm(\cite{ZH})}\label{pro-2.4}
For a $T_{0}$ space $X$, $X$ is a $C$-space if and only if $(\mathcal{O}(X),\subseteq)$ is a completely distributive lattice.
\end{proposition}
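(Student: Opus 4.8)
The plan is to reduce the statement to the classical Raney characterization of complete distributivity and then to translate the order-theoretic relation $\lhd$ of Definition 2.2 into purely topological terms inside the frame $\mathcal{O}(X)$. Recall first that $(\mathcal{O}(X),\subseteq)$ is always a complete lattice, with suprema given by unions and infima given by the interior of the intersection. By the classical theorem of Raney, such a complete lattice is completely distributive if and only if it is \emph{prime continuous} in the sense of Definition 2.2, i.e. $U=\bigvee\{V\in\mathcal{O}(X):V\lhd U\}$ for every $U\in\mathcal{O}(X)$. Since $V\lhd U$ forces $V\subseteq U$ (apply the definition of $\lhd$ to the one-element cover $\{U\}$ of $U$), and since joins in $\mathcal{O}(X)$ are unions, prime continuity amounts to the condition that for every open $U$ and every $x\in U$ there is an open $V$ with $x\in V$ and $V\lhd U$. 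Thus it suffices to show that this reformulated condition is equivalent to the defining property of a $C$-space.

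The heart of the argument is the following translation lemma: for $V,U\in\mathcal{O}(X)$ with $V\neq\emptyset$, one has $V\lhd U$ if and only if there exists a point $u\in U$ with $V\subseteq\ \uparrow\! u$. I would prove the nontrivial (forward) direction by a covering argument. Assuming $V\lhd U$, consider the family $S=\{W\in\mathcal{O}(X):V\not\subseteq W\}$. If $S$ covered $U$, the relation $V\lhd U$ would yield some $W\in S$ with $V\subseteq W$, a contradiction; hence there is a point $u\in U\setminus\bigcup S$, which means precisely that every open neighbourhood of $u$ contains $V$. Since in a $T_0$ space the intersection of all open neighbourhoods of $u$ equals $\uparrow\! u$, this gives $V\subseteq\ \uparrow\! u$ with $u\in U$. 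The converse direction is immediate: if $u\in U$ and $V\subseteq\ \uparrow\! u$, then any open cover of $U$ contains a member $W$ with $u\in W$, and because $W$ is an upper set we get $\uparrow\! u\subseteq W$, whence $V\subseteq W$.

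With the lemma in hand the two halves fall out. The reformulated prime-continuity condition says: for each open $U$ and each $x\in U$ there are an open $V$ and a point $u\in U$ with $x\in V\subseteq\ \uparrow\! u$. Having $x\in V\subseteq\ \uparrow\! u$ with $V$ open is exactly the statement $x\in\mathrm{int}_{X}\!\uparrow\! u$, and since $u\in U$ with $U$ an upper set we automatically have $\uparrow\! u\subseteq U$; conversely, taking $V=\mathrm{int}_{X}\!\uparrow\! u$ recovers an open set with $x\in V\subseteq\ \uparrow\! u$. Therefore the condition is identical to: for each open $U$ and each $x\in U$ there is $u\in U$ with $x\in\mathrm{int}_{X}\!\uparrow\! u\subseteq\ \uparrow\! u\subseteq U$, which is the definition of a $C$-space. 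Combining this chain of equivalences with Raney's theorem yields the proposition.

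The step I expect to be the main obstacle is the forward direction of the translation lemma, specifically the passage from the abstract totally-below relation to an actual point $u$: one must choose the cover $S$ correctly and then invoke the $T_0$ identity $\bigcap\{W\in\mathcal{O}(X):u\in W\}=\ \uparrow\! u$ to convert ``every neighbourhood of $u$ contains $V$'' into the order inclusion $V\subseteq\ \uparrow\! u$. Everything else is routine bookkeeping with unions and upper sets.
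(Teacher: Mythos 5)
The paper states this proposition as a cited preliminary (from Zhao--Ho) and gives no proof of its own, so there is nothing internal to compare against; judged on its own terms, your argument is correct and is essentially the standard proof found in the literature. The reduction to Raney's characterization (completely distributive $\Leftrightarrow$ prime continuous, i.e.\ every element is the join of the elements totally below it, which is exactly Definition 2.2 of the paper) is legitimate, and your translation lemma is the right pivot: the choice of cover $S=\{W\in\mathcal{O}(X):V\not\subseteq W\}$ correctly produces a point $u\in U$ all of whose neighbourhoods contain $V$, and the identity $\bigcap\{W\in\mathcal{O}(X):u\in W\}=\ \uparrow\!u$ (saturation of a point under the specialization order) converts this into $V\subseteq\ \uparrow\!u$; the converse direction uses only that open sets are upper sets. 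The final bookkeeping identifying ``$x\in V\subseteq\ \uparrow\!u$ with $V$ open and $u\in U$'' with the $C$-space condition is also sound, including the vacuous case $U=\emptyset$. In short: no gaps, and the route is the one the cited source itself takes in spirit, namely characterizing the totally-below relation on $\mathcal{O}(X)$ via the sets $\mathrm{int}_X\!\uparrow\!u$.
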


\begin{theorem}{\rm(\cite{GHK})} Let $L$ be a dcpo.
\begin{enumerate}
\item[\rm{(1)}] $L$ is a domain iff $(L,\sigma(L))$ is a $C$-space.

\item[\rm{(2)}] $L$ is a hypercontinuous domain iff $(L,\upsilon(L))$ is a $C$-space.
\end{enumerate}
\end{theorem}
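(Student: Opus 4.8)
The plan is to prove both equivalences by setting up a dictionary between the $C$-space approximation relation and the order-theoretic relations $\ll$ and $\prec$. For a $T_0$ space $X$, recall the $C$-space axiom says every point $x$ of an open set $U$ admits some $y\in U$ with $x\in$ int$_X\!\uparrow \!y\subseteq\ \uparrow \!y\subseteq U$. Throughout I will use that the specialization order of both $(L,\sigma(L))$ and $(L,\upsilon(L))$ is the original order of $L$ (for $\upsilon(L)$ this is because cl$_{\upsilon(L)}\{z\}=\ \downarrow \!z$, each $\downarrow \!z$ being subbasic closed), so that upper sets, lower sets, and suprema are unambiguous. The one general lemma I would isolate first is: in $(L,\sigma(L))$, $x\in$ int$_{\sigma(L)}\!\uparrow \!y$ implies $y\ll x$. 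Indeed, if $V:=$ int$_{\sigma(L)}\!\uparrow \!y$ is Scott open, $x\in V$, and $D$ is directed with $x\leq\bigvee D$, then $\bigvee D\in V$ forces $D\cap V\neq\emptyset$, yielding $d\in D\cap\ \uparrow \!y$, i.e. $y\leq d$. For $(L,\upsilon(L))$ the analogous statement is free: $x\in$ int$_{\upsilon(L)}\!\uparrow \!y$ is \emph{by definition} $y\prec x$.

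For (1), forward direction, assume $L$ is a domain and take a Scott open $U$ with $x\in U$. Since $\Downarrow \!x$ is directed with $\bigvee\Downarrow \!x=x\in U$, Scott openness gives $y\in\Downarrow \!x\cap U$, so $y\ll x$ and $y\in U$. I then claim $x\in\Uparrow \!y\subseteq$ int$_{\sigma(L)}\!\uparrow \!y\subseteq\ \uparrow \!y\subseteq U$, where $\Uparrow \!y=\{z:y\ll z\}$. The inclusions $\Uparrow \!y\subseteq\ \uparrow \!y\subseteq U$ are immediate, and $x\in\Uparrow \!y$ since $y\ll x$; the crucial point is that $\Uparrow \!y$ is Scott open, which I would deduce from the interpolation property of $\ll$ on continuous domains (if $y\ll\bigvee D$ choose $w$ with $y\ll w\ll\bigvee D$, find $d\in D$ with $w\leq d$, whence $y\ll d$). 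This is the main obstacle in the whole argument, and I would invoke the standard interpolation theorem for continuous domains.

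For (1), backward direction, assume $(L,\sigma(L))$ is a $C$-space and fix $x$. Set $w(x)=\{y:x\in$ int$_{\sigma(L)}\!\uparrow \!y\}$; by the lemma $w(x)\subseteq\Downarrow \!x\subseteq\ \downarrow \!x$, so $x$ bounds $w(x)$. To see $w(x)$ is directed, given $y_{1},y_{2}\in w(x)$ apply the $C$-space axiom to the open set int$_{\sigma(L)}\!\uparrow \!y_{1}\cap$ int$_{\sigma(L)}\!\uparrow \!y_{2}\ni x$ to obtain $y\in w(x)$ with $\uparrow \!y\subseteq\ \uparrow \!y_{1}\cap\ \uparrow \!y_{2}$, i.e. $y_{1},y_{2}\leq y$. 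To see $x=\bigvee w(x)$, suppose $z$ bounds $w(x)$ but $x\not\leq z$; then $x\in L\setminus\ \downarrow \!z$, a Scott open set, and the $C$-space axiom produces $y\in w(x)$ with $\uparrow \!y\subseteq L\setminus\ \downarrow \!z$, so $y\not\leq z$, a contradiction. Hence $\bigvee w(x)=x$, and since any $u\ll x=\bigvee w(x)$ with $w(x)$ directed is dominated by some element of $w(x)$, directedness of $w(x)$ upgrades to directedness of $\Downarrow \!x$; thus $\Downarrow \!x$ is directed with $\bigvee\Downarrow \!x=x$ and $L$ is continuous.

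For (2) the argument runs in parallel, replacing $\ll$ by $\prec$, $\sigma(L)$ by $\upsilon(L)$, and noting that $\{z:y\prec z\}=$ int$_{\upsilon(L)}\!\uparrow \!y$ is automatically $\upsilon(L)$-open. The backward direction is verbatim the argument above with $w(x)=\{y:y\prec x\}$, using that $L\setminus\ \downarrow \!z$ is subbasic upper open. For the forward direction, assume $L$ hypercontinuous, take $U\in\upsilon(L)$ with $x\in U$, and choose a basic neighbourhood $L\setminus\ \downarrow \!F\subseteq U$ with $F$ finite and $x\notin\ \downarrow \!F$. The key observation is that $L\setminus\ \downarrow \!F$ is \emph{Scott} open (a finite intersection of complements of Scott closed principal ideals), so since $x=\bigvee\{y:y\prec x\}$ is a directed sup lying in it, some $y$ with $y\prec x$ already lies in $L\setminus\ \downarrow \!F$; one checks $y\in L\setminus\ \downarrow \!F$ is equivalent to $\uparrow \!y\subseteq L\setminus\ \downarrow \!F\subseteq U$, giving $x\in$ int$_{\upsilon(L)}\!\uparrow \!y\subseteq\ \uparrow \!y\subseteq U$ with $y\in U$, as required for the $C$-space axiom.
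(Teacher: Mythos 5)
Your proof is correct; the paper states this theorem only as a citation from \cite{GHK} and gives no proof of its own, so there is nothing internal to compare against. Your argument is the standard one --- interpolation to show $\Uparrow\! y$ is Scott open in the forward direction of (1), the set $w(x)=\{y\in L: x\in \mathrm{int}\!\uparrow\! y\}$ with directedness and $\bigvee w(x)=x$ extracted from the $C$-space axiom in the converse directions, and the observation that $L\setminus\downarrow\! F$ is Scott open for the upper-topology case --- and it is essentially the same technique the authors themselves deploy in their proof of Theorem \ref{The-3.5} for the strong Scott topology.
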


\begin{theorem}\label{the-2.6}{\rm(\cite{GHK})} Let $L$ be a complete lattice. The following conditions are equivalent:
\begin{enumerate}
\item[\rm{(1)}] $L$ is completely distributive;

\item[\rm{(2)}] $L$ is distributive, and both $L$ and $L^{\rm{op}}$ are continuous lattices;

\item[\rm{(3)}] $L$ is continuous and every element is the sup of co-primes.
\end{enumerate}
\end{theorem}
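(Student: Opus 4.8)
The plan is to prove the cycle (1)$\Rightarrow$(2)$\Rightarrow$(3)$\Rightarrow$(1), using as the central device the relation $\lhd$ of Definition 2.2 together with the following form of Raney's theorem, which I would establish first as a lemma: \emph{a complete lattice $L$ is completely distributive iff $x=\bigvee\{y:y\lhd x\}$ for every $x\in L$}, i.e. iff $L$ is prime continuous in the sense of Definition 2.2. For the forward direction I would apply the complete distributive law to the index family $\mathcal S=\{S\subseteq L:x\le\bigvee S\}$ with the identity as entries: then $\bigwedge_{S\in\mathcal S}\bigvee S=\bigvee_{f}\bigwedge_{S\in\mathcal S}f(S)$ over choice functions $f$. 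The left side dominates $x$, while each term $t_f=\bigwedge_{S}f(S)$ satisfies $t_f\lhd x$, since for any $T$ with $x\le\bigvee T$ we have $T\in\mathcal S$ and hence $t_f\le f(T)\in T$; this gives $x\le\bigvee\{y:y\lhd x\}\le x$. For the converse I would expand $\bigwedge_i\bigvee_j x_{ij}$ by prime continuity and, for each $y\lhd\bigwedge_i\bigvee_j x_{ij}$, use $\lhd$-composition with $\le$ to choose for every $i$ an index with $y\le x_{ij}$, producing a choice function that exhibits $y$ below the right-hand side. I would also record the two facts $\lhd\ \subseteq\ \ll$ (specialise $S$ to a directed set) and the self-duality of complete distributivity.

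The two easy edges then follow. For (1)$\Rightarrow$(2): the binary distributive law is the instance $I=\{1,2\}$ of the complete law; continuity of $L$ comes from the Lemma together with $\lhd\ \subseteq\ \ll$ and the fact that $\Downarrow x$ is automatically directed; and continuity of $L^{\mathrm{op}}$ follows by applying the same reasoning to the dual, which is completely distributive by self-duality. For (3)$\Rightarrow$(1): using continuity of $L$ and the hypothesis that co-primes join-generate every element, I would write each co-prime $p\le x$ as the join of the co-prime elements $q\ll p$. For such a $q$ (note $q\ll p\le x$), given $x\le\bigvee S$ I get $q\le\bigvee F$ for some finite $F\subseteq S$, whence $q\le s$ for some $s\in F$ by co-primeness, so $q\lhd x$. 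Since the co-primes below $x$ join to $x$, this yields $x=\bigvee\{y:y\lhd x\}$, and the Lemma gives complete distributivity.

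The hard edge is (2)$\Rightarrow$(3). Continuity of $L$ is already assumed, so the content is that the co-prime elements join-generate every $x$. My plan is to reduce this to the lemma that \emph{in a distributive continuous lattice the prime (meet-irreducible) elements are order-generating}---equivalently, that a continuous frame is spatial---applied to $L^{\mathrm{op}}$: since distributivity is self-dual and $L^{\mathrm{op}}$ is continuous by hypothesis, the primes of $L^{\mathrm{op}}$ order-generate $L^{\mathrm{op}}$, i.e. the co-primes of $L$ join-generate $L$, which is exactly condition (3). The main obstacle is this order-generation lemma: it is proved by a Zorn's-lemma argument producing, for each $a\not\le x$, a prime $p\ge x$ with $a\not\le p$, and the delicate points are using continuity to check that the relevant chains admit upper bounds in the Zorn poset and using distributivity to force the maximal element to be meet-irreducible. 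This is precisely where both the distributivity of $L$ and the continuity of $L^{\mathrm{op}}$ are genuinely used.
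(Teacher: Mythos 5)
The paper itself gives no proof of Theorem~\ref{the-2.6}; it is quoted from \cite{GHK} as a known result, so there is no in-paper argument to measure you against. Judged on its own, your outline is the standard textbook route and is correct in structure: the Raney lemma (complete distributivity is equivalent to prime continuity in the sense of the relation $\lhd$ of Definition~2.2) is proved exactly as you describe; the edge $(1)\Rightarrow(2)$ is sound (binary distributivity as a two-index instance, $\lhd\subseteq\ \ll$ plus automatic directedness of $\Downarrow x$ for continuity, self-duality for $L^{\rm op}$); and $(3)\Rightarrow(1)$ is sound, including the key observation that a co-prime $q$ with $q\ll p\le x$ satisfies $q\lhd x$. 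The reduction of $(2)\Rightarrow(3)$ to order-generation by primes in the distributive continuous lattice $L^{\rm op}$ is also the right move. Note, however, that you invoke two classical but nontrivial facts without proof --- the self-duality of complete distributivity and the order-generation lemma itself --- so a self-contained write-up owes arguments (or citations) for both.

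The one place where the sketch, as written, would not close is the Zorn argument inside $(2)\Rightarrow(3)$. You attribute the existence of upper bounds for chains to continuity and the meet-irreducibility of the maximal element to distributivity; neither attribution is right, and the device that actually makes the argument work is absent from your description. The correct scheme is: given $a\not\le x$, use continuity and the interpolation property of $\ll$ to manufacture a \emph{Scott-open filter} $U$ with $a\in U$ and $x\notin U$ (e.g.\ $U=\bigcup_n\uparrow\! u_n$ for an interpolating chain $u_0\ll u_1\ll\cdots\ll a$ with $u_0\not\le x$); apply Zorn to $\{y\ge x: y\notin U\}$, where chains have upper bounds because $U$ is \emph{Scott open}; a maximal element $p$ is meet-irreducible because $U$ is a \emph{filter} (if $p=u\wedge v$ with $u,v>p$, maximality puts $u,v\in U$, hence $p\in U$, a contradiction); and distributivity enters only at the very end, to upgrade ``irreducible'' to ``prime.'' A Zorn argument run directly on a set of the form $\{y\ge x: a\not\le y\}$, or even $\{y\ge x: u\not\le y\}$ for a single $u\ll a$, fails at the chain-bound step, so the Scott-open filter is not an optional refinement but the missing idea.
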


\begin{definition}{\rm(\cite{XZ})} Let $L$ be a dcpo. A subset $U\subseteq L$ is called \emph{strongly Scott open} if (i) $U=\ \uparrow \!U$, and (ii) for any directed subset $D$ of $L$ and $x\in L$, $\bigcap _{d\in D}\uparrow \!d~\cap \uparrow \!x\subseteq U$ (that is $\uparrow\bigvee D\ \cap\uparrow x\subseteq U$) implies $\uparrow \!d\ \cap \uparrow \!x\subseteq U$ for some $d\in D$. Let $\sigma^{s}(L)$ denote the set of all strongly Scott open subsets of $L$.

Clearly, if $U,V\in \sigma ^{s}(L)$, then $U\cap V\in \sigma ^{s}(L)$. The topology generated by $\sigma ^{s}(L)$ (as a base) is called the \emph{strong Scott topology} on $L$ and denote it by $\sigma _{s}(L)$. The space $(L,\sigma _{s}(L))$ is called the \emph{strong Scott space} of $L$, and will be denoted by $\sum _{s}L$.

In order not to cause ambiguity, the elements in $\sigma _{s}(L)$ will be called \emph{strong Scott topology open sets}.
\end{definition}

\begin{proposition}{\rm(\cite{XZ})}
Let $L$ be a poset, then $\upsilon(L)\subseteq \sigma^{s}(L)\subseteq \sigma_{s}(L)\subseteq \sigma(L)$.
\end{proposition}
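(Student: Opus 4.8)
The statement splits into three inclusions, and the plan is to treat each separately, reducing the two outer ones to a computation at the subbasic/base level, since $\sigma^{s}(L)$ is only a base for $\sigma_{s}(L)$ and not a priori a topology. The middle inclusion $\sigma^{s}(L)\subseteq\sigma_{s}(L)$ is immediate: every member of a base belongs to the topology it generates.

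For the last inclusion, I would show that every strongly Scott open set is already Scott open; since $\sigma(L)$ is a topology and $\sigma^{s}(L)$ generates $\sigma_{s}(L)$, this yields $\sigma_{s}(L)\subseteq\sigma(L)$. So let $U\in\sigma^{s}(L)$; it is an upper set by (i), and for Scott-openness I must check that a directed $D$ with $s:=\bigvee D\in U$ meets $U$. Fixing $d^{*}\in D$ and passing to the cofinal directed set $D'=D\cap\uparrow d^{*}$ (which has the same supremum), I would apply condition (ii) with this $D'$ and $x=d^{*}$: since $\bigcap_{d\in D'}\uparrow d\cap\uparrow d^{*}=\uparrow s\subseteq U$, (ii) produces $d_{0}\in D'$ with $\uparrow d_{0}\cap\uparrow d^{*}=\uparrow d_{0}\subseteq U$, so $d_{0}\in D\cap U$. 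Hence $U$ is Scott open.

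For the first inclusion I would first verify that each subbasic set $L\setminus\downarrow x$ lies in $\sigma^{s}(L)$. Condition (i) is clear; for (ii), given a directed $D$ and an element $a$ with $\bigcap_{d\in D}\uparrow d\cap\uparrow a\cap\downarrow x=\emptyset$, I argue contrapositively: if $\uparrow d\cap\uparrow a\cap\downarrow x\neq\emptyset$ for every $d$, then choosing a point in each forces $d\leq x$ for all $d$ and $a\leq x$, so that $x$ itself is a common upper bound of $D\cup\{a\}$ lying in $\downarrow x$, contradicting the hypothesis. Thus $L\setminus\downarrow x\in\sigma^{s}(L)$, and by the stated closure of $\sigma^{s}(L)$ under finite intersections every basic set $L\setminus\downarrow F$ ($F$ finite) lies in $\sigma^{s}(L)$ as well.

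The remaining and genuinely delicate point is that an arbitrary $U\in\upsilon(L)$, being a union of such basic sets, still satisfies (ii). Here I would exploit that the complement $A=L\setminus U$ is $\upsilon$-closed: picking any common upper bound $w\in W:=\bigcap_{d}\uparrow d\cap\uparrow a\subseteq U$ and using $\upsilon$-openness of $U$ at $w$ yields a finite $F$ with $A\subseteq\downarrow F$, which reduces the problem to the finitely many generators $f\in F$ and, via directedness, to selecting a single $d$ above finitely many witnesses. The hard part will be the case where some $f\in F$ is itself a common upper bound of $D\cup\{a\}$ (so $f\in W\subseteq U$): the witness supplied by the subbasic argument then lands in $U$ rather than in $A$, and one cannot immediately clear the intersection with $A$. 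I expect this to be the crux, and I would resolve it by a Rudin-type argument: assuming (ii) fails, the nonempty filtered family $\{\uparrow d\cap\uparrow a\cap A\}_{d}$ together with the finiteness of $F$ (and of the finite sets generating the $\upsilon$-closed set $A$) should produce a common upper bound of $D\cup\{a\}$ lying in $A$, contradicting $W\cap A=\emptyset$. The degenerate case $W=\emptyset$ would be handled separately, noting that $\emptyset$ need not itself belong to the base $\sigma^{s}(L)$.
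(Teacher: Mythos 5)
The paper does not actually prove this proposition---it is quoted from \cite{XZ} without argument---so I am judging your proposal on its own terms. Your treatment of the second and third inclusions is correct: $\sigma^{s}(L)\subseteq\sigma_{s}(L)$ is immediate, and your cofinal-subset argument does show that every member of $\sigma^{s}(L)$ is Scott open, whence $\sigma_{s}(L)\subseteq\sigma(L)$. For the first inclusion, your verification that $L\setminus\downarrow\!x\in\sigma^{s}(L)$, and hence $L\setminus\downarrow\!F\in\sigma^{s}(L)$ for every finite $F$, is also correct, and this already yields $\upsilon(L)\subseteq\sigma_{s}(L)$, since every $\upsilon$-open set is a union of such basic sets.

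The genuine gap is your final step, and it cannot be repaired: an arbitrary union of basic $\upsilon$-open sets need not satisfy condition (ii), so the literal inclusion $\upsilon(L)\subseteq\sigma^{s}(L)$ is false under the definitions used here. You half-noticed this for $U=\emptyset$: in the dcpo of Example \ref{example3.12}, taking $D=\{a_{n}\}$ and $x=b$ gives $\uparrow\!\bigvee D\ \cap\uparrow\!b=\uparrow\!\omega_{0}\ \cap\uparrow\!b=\emptyset$ while $\uparrow\!a_{n}\ \cap\uparrow\!b\neq\emptyset$ for all $n$, so $\emptyset\in\upsilon(L)\setminus\sigma^{s}(L)$. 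The failure is not confined to the empty set, so no Rudin-type argument will rescue the nonempty case either. Let $L=\{x\}\cup\{d_{n}:n\in\mathbb{N}\}\cup\{z_{n}:n\in\mathbb{N}\}\cup\{s,f,g\}$ with $d_{1}<d_{2}<\cdots<s$, $s<f$, $s<g$, $x<z_{n}<f$, $z_{n}<g$, $d_{m}<z_{n}$ iff $m\leq n$, where $f,g$ are incomparable, the $z_{n}$ are pairwise incomparable, and each $z_{n}$ is incomparable with $s$. This is a dcpo with $\bigvee_{n}d_{n}=s$, and $U=\{f,g\}=(L\setminus\downarrow\!f)\cup(L\setminus\downarrow\!g)\in\upsilon(L)$ is nonempty; yet $\uparrow\!s\ \cap\uparrow\!x=\{f,g\}\subseteq U$ while $z_{n}\in(\uparrow\!d_{n}\ \cap\uparrow\!x)\setminus U$ for every $n$, so $U\notin\sigma^{s}(L)$, even though $U\in\sigma_{s}(L)$ as a union of two members of $\sigma^{s}(L)$. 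So you should stop after the basic case and record the first inclusion as $\upsilon(L)\subseteq\sigma_{s}(L)$ (equivalently, as a statement about the subbasic sets only)---which is the only form the rest of the paper ever uses.
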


\section{Strongly continuous domains}

In this section we introduce the notion of strongly continuous domains and give some characterizations of strongly continuous domains from the aspect of order structure.

\begin{definition} \label{def-3.1}Let $L$ be a dcpo. $L$ is called a \emph{strongly continuous domain} if $(L,\sigma_{s}(L))$ is a $C$-space, i.e., for any $U\in\sigma_s(L)$ and $x\in L$, $x\in U$ implies there exists a $u\in U$ such that $x\in{\rm{int}}_{\sigma_s(L)}\!\uparrow \!u\subseteq\ \! \uparrow \!u\subseteq U$.
\end{definition}

The reason we adopt the terminology ``strongly continuous'' in the paper is that we introduce the notion of strongly continuous domains via strong Scott topology. It should be pointed out that the terminology ``strongly continuous'' in this paper is completely different from that in \cite{XM}.

\begin{definition}(\cite{XM}) Let $P$ be a poset and $x, y\in P$. We write $x\ll_ly$, if for any directed set $D$ and any upper bound $z$ of $D$ with $y\leq\sup_zD$, there is a $d\in D$ such that $x\leq d$, where $\sup_zD$ denotes the supremum of $D$ in $\downarrow \!z$. $P$ is called a strongly continuous poset if for all $x\in P$, $\Downarrow_lx=\{y\in L: y\ll_lx\}$ is directed and $\sup\Downarrow_lx=x$.
\end{definition}

\begin{definition} Let $L$ be a poset. A binary relation $\ll_{s}$ on $L$ is defined as follows: $x\ll_{s}y$ if for any directed subset $D\subseteq L$ and $a\in L$, $\bigcap _{d\in D}\!\uparrow \!d\ \cap \uparrow \!a\subseteq\ \!\uparrow \!y$ implies $\uparrow \!d\ \cap \uparrow \!a\subseteq\ \!\uparrow \!x$ for some $d\in D$.
\end{definition}

\begin{remark} Let $L$ be a dcpo and $u, x, y, z\in L$. We have the following.
\begin{enumerate}
\item[(1)] $x\ll_{s}y$ implies $x\ll y$, and $x\ll_{s}y$ is equivalent to $x\ll y$ if $L$ is a sup semilattice.
\item[(2)] $u\leq x\ll_{s}y\leq z$ implies $u\ll_{s}z$.
\item[(3)] $x\ll_{s}z$, $y\ll_{s}z$ implies $x\vee y\ll_{s}z$ whenever $x\vee y$ exists in $L$.
\item[(4)] $0\ll_{s}x$ whenever $L$ has a smallest element $0$.
\end{enumerate}
Hence $\ll_{s}$ is an auxiliary relation on $L$.
\end{remark}
For $x\in L$, we write $\Downarrow_sx=\{y\in L: y\ll_sx\}$ and $\Uparrow_sx=\{y\in L: x\ll_sy\}$.

\begin{proposition}
Let $L$ be a dcpo. Suppose that there exists a directed set $D\subseteq\ \Downarrow_{s}\!x$ such that $\sup D=x$, then we have the following conditions.
\begin{enumerate}
\item[\rm{(1)}] $\Downarrow_{s}\!x$ is directed and $\sup\Downarrow_{s}\!x=x$;
\item[\rm{(2)}] If $y\ll_{s}x$ in $\downarrow \!x$, then $y\ll_{s}x$ in $L$.
\end{enumerate}
\end{proposition}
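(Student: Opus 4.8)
The plan is to derive both statements from a single observation. Since $D$ is directed with $\sup D=x$, we have $\bigcap_{d\in D}\uparrow\! d=\ \uparrow\!\sup D=\ \uparrow\! x$, so whenever we feed the directed set $D$ into the definition of $\ll_s$ the hypothesis side becomes easy to satisfy, and directedness of $D$ lets us convert a conclusion of the form $\uparrow\! d\ \cap\uparrow\! a\subseteq\ \uparrow\! y$ into a genuine inequality $y\le d'$ with $d'\in D$. Concretely, I would first prove the key claim: \emph{if $y\ll_s x$ then $y\le d$ for some $d\in D$.} Fixing any $d_0\in D$, the inclusion $\bigcap_{d\in D}\uparrow\! d\ \cap\uparrow\! d_0=\ \uparrow\! x\ \cap\uparrow\! d_0\subseteq\ \uparrow\! x$ lets me instantiate $y\ll_s x$ at the pair $(D,d_0)$ to obtain $d_1\in D$ with $\uparrow\! d_1\ \cap\uparrow\! d_0\subseteq\ \uparrow\! y$; choosing $d'\in D$ above both $d_0$ and $d_1$ yields $d'\in\ \uparrow\! y$, i.e. $y\le d'$.

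For (1), the claim gives directedness of $\Downarrow_s x$ at once: given $y_1,y_2\in\Downarrow_s x$, take $d_1,d_2\in D$ with $y_i\le d_i$ and then $d\in D$ above $d_1,d_2$; this $d$ lies in $D\subseteq\Downarrow_s x$ and dominates $y_1,y_2$. For the supremum I would note $\Downarrow_s x\subseteq\ \downarrow\! x$ (by Remark (1), $y\ll_s x$ gives $y\ll x$, hence $y\le x$), so $x$ bounds $\Downarrow_s x$ from above, while $D\subseteq\Downarrow_s x$ with $\sup D=x$ forces any upper bound of $\Downarrow_s x$ to be $\ge x$; thus $\sup\Downarrow_s x=x$. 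I want to emphasise that routing through $D$ is precisely what lets me avoid Remark (3), whose use of $y_1\vee y_2$ would illegitimately assume binary suprema that need not exist in $L$.

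For (2), I would replay the same extraction inside the sub-poset $\downarrow\! x$, where $D$ is still directed with $\sup_{\downarrow x}D=x$ and where the relativized up-sets satisfy $\uparrow_{\downarrow x}z=\ \uparrow\! z\cap\ \downarrow\! x$, so that $\bigcap_{d\in D}\uparrow_{\downarrow x}d=\{x\}=\ \uparrow_{\downarrow x}x$. Hence for a fixed $d_0\in D$ the hypothesis of ``$y\ll_s x$ in $\downarrow\! x$'' is satisfied at the pair $(D,d_0)$, producing $d_1\in D$ with $\uparrow_{\downarrow x}d_1\cap\uparrow_{\downarrow x}d_0\subseteq\ \uparrow_{\downarrow x}y$, and an upper bound $d'\in D$ of $d_0,d_1$ then gives $y\le d'$. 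Since $d'\in D\subseteq\Downarrow_s x$ we have $d'\ll_s x$ in $L$, so from $y\le d'\ll_s x\le x$ and Remark (2) I conclude $y\ll_s x$ in $L$.

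I expect the only real obstacle to be the relativization in part (2): one must keep up-sets, intersections, and suprema computed inside $\downarrow\! x$ rather than in $L$, and verify $\sup_{\downarrow x}D=\sup_L D=x$ together with the collapse $\uparrow_{\downarrow x}x=\{x\}$. Once those identities are secured the local hypothesis of $\ll_s$ becomes vacuous along $D$ and the proof closes exactly as in the key claim; everything else is the same bookkeeping used for (1).
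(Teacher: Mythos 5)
Your proof is correct and follows essentially the same route as the paper's: both arguments reduce everything to the observation that any $y\ll_{s}x$ (whether in $L$ or in $\downarrow\!x$) must lie below some element of $D$, then use directedness of $D\subseteq\ \Downarrow_{s}\!x$ for part (1) and the transitivity property $y\leq d\ll_{s}x\Rightarrow y\ll_{s}x$ for part (2). The only difference is cosmetic: you extract the dominating $d\in D$ by instantiating the definition of $\ll_{s}$ directly at the pair $(D,d_{0})$, whereas the paper passes through $\ll_{s}\ \subseteq\ \ll$ and the definition of $\ll$; your version is also somewhat more explicit about the relativization of up-sets and suprema to $\downarrow\!x$ in part (2), which the paper leaves implicit.
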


\begin{proof}
(1): Let $y_{1},y_{2}\in\ \!\Downarrow_{s}\!\!x$, i.e., $y_{1}\ll_{s}x$ and $y_{2}\ll_{s}x$. Since $\ll_{s}\ \!\subseteq\ \!\ll$, then $y_{1}\ll x$ and $y_{2}\ll x$. Since $\textrm{sup}D=x$, then $y_{1}\leq d_{1}, y_{2}\leq d_{2}$ for some $d_{1},d_{2}\in D$. Since $D$ is directed, there exists $d_{3}\in D$ such that $y_{1}\leq d_{1}\leq d_{3}$, $y_{2}\leq d_{2}\leq d_{3}$, then $d_{3}\geq y_{1},y_{2}$. Thus $\Downarrow_{s}\!x$ is directed by $d_{3}\in D\subseteq\ \!\Downarrow_{s}\!x$. Since $x=\textrm{sup}D\leq \textrm{sup}\Downarrow_{s}\!x\leq \textrm{sup}\downarrow \!x=x$, we have $\textrm{sup}\Downarrow_{s}\!x=x$.\\
(2): Let $y\ll_{s}x$ in $\downarrow \!x$. Then $y\ll x$. Since directed set $D\subseteq\ \!\Downarrow_{s}\!x$ and $\textrm{sup}D=x$, $y\leq d$ for some $d\in D$. Therefore $y\ll_{s}x$ by $d\in\ \!\Downarrow_{s}\!x$.
\end{proof}

\begin{theorem}\label{The-3.5} Let $L$ be a dcpo. The following two conditions are equivalent:
\begin{enumerate}
\item[\rm{(1)}] $L$ is a strongly continuous domain;
\item[\rm{(2)}] For any $x\in L$, $\Downarrow_{s}\!x$ is directed, $x=\sup\Downarrow_{s}\!x$ and $\Uparrow_{s}\!x\in\sigma_{s}(L)$.
\end{enumerate}
\end{theorem}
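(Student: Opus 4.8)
The plan is to prove both implications by tying the strong way-below relation to the strong-Scott interior operator. Throughout I will use freely that $\sigma^{s}(L)$ is a base for $\sigma_{s}(L)$ (so $z\in {\rm int}_{\sigma_{s}(L)}\!\uparrow\! y$ holds iff $z\in W\subseteq\ \uparrow\! y$ for a single $W\in\sigma^{s}(L)$), that every $\sigma_{s}(L)$-open set is an upper set, and that $\bigcap_{d\in D}\uparrow\! d=\ \uparrow\!\sup D$ in a dcpo. The first step is a bridging lemma: $z\in {\rm int}_{\sigma_{s}(L)}\!\uparrow\! y$ implies $y\ll_{s}z$. Indeed, choosing $W\in\sigma^{s}(L)$ with $z\in W\subseteq\ \uparrow\! y$ and feeding any witnessing data $D,a$ (with $\uparrow\!\sup D\ \cap\uparrow\! a\subseteq\ \uparrow\! z\subseteq W$) into condition (ii) for $W$ produces $d\in D$ with $\uparrow\! d\ \cap\uparrow\! a\subseteq W\subseteq\ \uparrow\! y$, which is exactly $y\ll_{s}z$. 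Hence ${\rm int}_{\sigma_{s}(L)}\!\uparrow\! y\subseteq\ \Uparrow_{s}\!y$ always.

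For (1)$\Rightarrow$(2) I would set $B_{x}=\{y\in L:x\in {\rm int}_{\sigma_{s}(L)}\!\uparrow\! y\}$. The $C$-space hypothesis, applied to the open sets $L$, to ${\rm int}_{\sigma_{s}(L)}\!\uparrow\! y_{1}\cap {\rm int}_{\sigma_{s}(L)}\!\uparrow\! y_{2}$, and to an open set separating $x$ from a putative upper bound, shows that $B_{x}$ is directed with $\sup B_{x}=x$. Since $B_{x}\subseteq\ \Downarrow_{s}\!x$ by the lemma, the Proposition preceding this theorem yields that $\Downarrow_{s}\!x$ is directed with $\sup\Downarrow_{s}\!x=x$. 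It then remains to prove the reverse inclusion $\Uparrow_{s}\!x\subseteq {\rm int}_{\sigma_{s}(L)}\!\uparrow\! x$, for then $\Uparrow_{s}\!x= {\rm int}_{\sigma_{s}(L)}\!\uparrow\! x\in\sigma_{s}(L)$. Given $x\ll_{s}q$, I would fix $u_{0}\in B_{q}$ and replace $B_{q}$ by its cofinal directed subfamily $D'=B_{q}\cap\uparrow\! u_{0}$ (still with supremum $q$); applying $x\ll_{s}q$ to $D'$ and $a=u_{0}$ gives $d\in D'$ with $\uparrow\! d=\ \uparrow\! d\ \cap\uparrow\! u_{0}\subseteq\ \uparrow\! x$, and since $d\in B_{q}$ we get $q\in {\rm int}_{\sigma_{s}(L)}\!\uparrow\! d\subseteq\ \uparrow\! x$, i.e. $q\in {\rm int}_{\sigma_{s}(L)}\!\uparrow\! x$.

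For (2)$\Rightarrow$(1), given $U\in\sigma_{s}(L)$ and $x\in U$, I would first pass to a basic $W\in\sigma^{s}(L)$ with $x\in W\subseteq U$. Using that $\Downarrow_{s}\!x$ is directed with supremum $x$, I fix $d_{0}\in\ \Downarrow_{s}\!x$ and apply condition (ii) for $W$ to the cofinal subfamily $\Downarrow_{s}\!x\cap\uparrow\! d_{0}$ and $a=d_{0}$; this locates some $d\ll_{s}x$ with $\uparrow\! d\subseteq W$, in particular $d\in U$. Finally $\Uparrow_{s}\!d\in\sigma_{s}(L)$ together with $x\in\ \Uparrow_{s}\!d\subseteq\ \uparrow\! d\subseteq U$ gives $x\in {\rm int}_{\sigma_{s}(L)}\!\uparrow\! d\subseteq\ \uparrow\! d\subseteq U$, which is the required $C$-space witness.

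I expect the main obstacle to be the reverse inclusion $\Uparrow_{s}\!x\subseteq {\rm int}_{\sigma_{s}(L)}\!\uparrow\! x$ in (1)$\Rightarrow$(2): a direct attempt to show $\Uparrow_{s}\!x$ open is circular, and one is tempted to route through the interpolation property, which is not yet available at this point. The device that breaks the circularity, and which is used in both directions, is to replace a directed set $D$ with supremum $x$ by the cofinal subfamily $D\cap\uparrow\! d_{0}$ and to run the definition of $\ll_{s}$ with the parameter $a=d_{0}$; this collapses the troublesome term $\uparrow\! d\ \cap\uparrow\! a$ to $\uparrow\! d$ and converts the defining implication of $\ll_{s}$ directly into the desired containment of principal up-sets.
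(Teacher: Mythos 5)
Your proposal is correct and follows essentially the same route as the paper: both directions hinge on the set $B_{x}=\{y\in L:x\in\mathrm{int}_{\sigma_{s}(L)}\!\uparrow\!y\}$ (the paper's $H_{x}$), the bridging fact $\mathrm{int}_{\sigma_{s}(L)}\!\uparrow\!y\subseteq\ \Uparrow_{s}\!y$ obtained from a basic $\sigma^{s}(L)$-open set, the identification $\Uparrow_{s}\!x=\mathrm{int}_{\sigma_{s}(L)}\!\uparrow\!x$, and in the converse direction the location of some $d\ll_{s}x$ inside a given open set. Your cofinal-subfamily trick with $a=d_{0}$ and your appeal to the preceding Proposition are just local substitutes for the paper's use of $\ll_{s}\subseteq\ \ll$ together with the ideal property of $H_{x}$, so the argument is sound but not a genuinely different approach.
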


\begin{proof}
$(1)\Rightarrow(2)$: Suppose that $L$ is a strongly continuous domain. Then $(L,\sigma_{s}(L))$ is a $C$-space. For any $a\in L$, let $H_{a}=\{x\in L:a\in \textrm{int}_{\sigma_{s}(L)}(\uparrow \!\!x)\}$. Then since $(L,\sigma_{s}(L))$ is a $C$-space and $a\in L\in\sigma_{s}(L)$, there exists $x\in L$ such that $a\in \textrm{int}_{\sigma_{s}(L)}(\uparrow \!\!x)\subseteq\ \!\uparrow \!\!x\subseteq L$. Hence $H_{a}\neq\emptyset$. For any $x_{1},x_{2}\in H_{a}$, since $(L,\sigma_{s}(L))$ is a $C$-space, there exists $x_{3}\in L$ such that $a\in\textrm{int}_{\sigma_{s}(L)}(\uparrow \!\!x_{3})\subseteq\ \!\uparrow\!\! x_{3}\subseteq(\textrm{int}_{\sigma_{s}(L)}(\uparrow \!\!x_{1}))\cap(\textrm{int}_{\sigma_{s}(L)}(\uparrow \!\!x_{2}))$, hence $x_{3}\in H_{a}$ and $x_{1},x_{2}\leq x_{3}$, thus $H_{a}$ is directed. If $x\in H_{a}$ and $y\leq x$, then $a\in\textrm{int}_{\sigma_{s}(L)}(\uparrow \!\!x)\subseteq\textrm{int}_{\sigma_{s}(L)}(\uparrow \!\!y)$, thus $y\in H_{a}$, hence $H_{a}$ is a lower set. We now show that $\bigvee H_{a}=a$. Obviously, $a$ is an upper bound of $H_{a}$. Let $b$ be an other upper bound of $H_{a}$, i.e., $H_{a}\subseteq\ \!\downarrow \!b$. Assume that $a\nleq b$, then $a\in L\setminus\downarrow \!b\in\upsilon(L)\subseteq\sigma_{s}(L)$, since $(L,\sigma_{s}(L))$ is a $C$-space, there exists $x\in L\setminus\downarrow \!b$ such that $a\in\textrm{int}_{\sigma_{s}(L)}(\uparrow \!\!x)\subseteq\ \!\uparrow\!\! x\subseteq L\setminus\downarrow \!b$. Hence $x\in H_{a}$ and $x\nleq b$, this is a contradiction. Thus $\bigvee H_{a}=a$. We now show that $H_{a}=\ \!\Downarrow_{s}\!a$. On the one hand, if $x\in H_{a}$, then there exists $U\in\sigma^{s}(L)$ such that $a\in U\subseteq\textrm{int}_{\sigma_{s}(L)}(\uparrow \!\!x)$. For any directed subset $D\subseteq L$ and $z\in L$, if $\uparrow\!\bigvee D~\cap\uparrow \!z\subseteq\ \!\uparrow \!a$, then $\uparrow\!\bigvee D\ \!\cap\uparrow \!z\subseteq\ \!\uparrow \!a\subseteq U\in\sigma^{s}(L)$, hence $\uparrow \!d~\cap\uparrow \!z\subseteq U\subseteq\textrm{int}_{\sigma_{s}(L)}(\uparrow \!\!x)\subseteq\ \!\uparrow \!x$ for some $d\in D$. Thus $x\ll_{s}a$. Therefore $H_{a}\subseteq\ \!\Downarrow_{s}\!a$. On the other hand, for any $y\in\ \!\Downarrow_{s}\!a$, i.e., $y\ll_{s}a=\bigvee H_{a}$, then $y\ll a=\bigvee H_{a}$. Since $H_{a}$ is an ideal, $y\in\ \!\downarrow \!H_{a}=H_{a}$. Now we prove that $\Uparrow_{s}\!\!a\in\sigma_{s}(L)$. Since $x\in\ \!\Uparrow_{s}\!\!a$ iff $a\in\ \!\Downarrow_{s}\!x=H_{x}$ iff $x\in\textrm{int}_{\sigma_{s}(L)}(\uparrow \!\!a)$, we have $\Uparrow_{s}\!a=\textrm{int}_{\sigma_{s}(L)}(\uparrow \!a)\in\sigma_{s}(L)$.\\
$(2)\Rightarrow(1)$: Suppose $a,b\in L$ such that $a\ll_{s}b$. Since $\Uparrow_{s}\!\!a\in\sigma_{s}(L)$ and $\Uparrow_{s}\!\!a\subseteq\ \!\uparrow \!\!a$, then we have $b\in\ \!\Uparrow_{s}\!a\subseteq\textrm{int}_{\sigma_{s}(L)}(\uparrow \!\!a)$. We now show that $(L,\sigma_{s}(L))$ is a $C$-space. Suppose that $U\in\sigma_{s}(L)$ and $b\in U$, then $\bigvee\{x\in L:x\ll_{s}b\}=b\in U$. Since $\sigma_{s}(L)\subseteq\sigma(L)$, $\bigvee\{x\in L:x\ll_{s}b\}=b\in U\in\sigma_{s}(L)\subseteq\sigma(L)$, thus $\{x\in L:x\ll_{s}b\}\cap U\neq\emptyset$, hence $x\in U$ for some $x\in\ \!\Downarrow_{s}\!b$. Thus $b\in\textrm{int}_{\sigma_{s}(L)}(\uparrow \!x)$. Therefore $(L,\sigma_{s}(L))$ is a $C$-space.
\end{proof}

According to the Definition \ref{def-3.1} and Theorem \ref{The-3.5}, strongly continuous domains can be characterized by means of strong way-below relation.

\begin{proposition}
Let $L$ be a strongly continuous domain. If $x\ll_{s}z$ and $z\leq\bigvee D$ for a directed set in $L$, then $x\ll_{s}d$ for some $d\in D$.
\end{proposition}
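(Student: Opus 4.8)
The plan is to reduce the statement to the ordinary Scott-openness of the set $\Uparrow_{s}\!x$, which is available precisely because $L$ is strongly continuous. First I would rewrite the hypotheses topologically. The assumption $x\ll_{s}z$ says exactly that $z\in\ \!\Uparrow_{s}\!x$. Since $L$ is a strongly continuous domain, Theorem \ref{The-3.5} gives $\Uparrow_{s}\!x\in\sigma_{s}(L)$; in particular $\Uparrow_{s}\!x$ is an upper set. As $z\leq\bigvee D$ and $\Uparrow_{s}\!x$ is an upper set, it follows that $\bigvee D\in\ \!\Uparrow_{s}\!x$.

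Next I would pass from the strong Scott topology to the Scott topology. By the inclusion $\sigma_{s}(L)\subseteq\sigma(L)$ recorded earlier, the set $\Uparrow_{s}\!x$ is Scott open. Now $D$ is a directed subset of the dcpo $L$, so $\bigvee D$ exists, and we have just shown $\bigvee D\in\ \!\Uparrow_{s}\!x$. The defining property of Scott-open sets then yields $D\cap\ \!\Uparrow_{s}\!x\neq\emptyset$, i.e.\ there is some $d\in D$ with $d\in\ \!\Uparrow_{s}\!x$. Unwinding the notation, this says $x\ll_{s}d$, which is the desired conclusion.

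There is essentially no analytic obstacle here: the entire content is the observation that strong continuity promotes each $\Uparrow_{s}\!x$ to a strong Scott-open (hence Scott-open) set, after which the result is just the standard fact that a directed set whose supremum lies in a Scott-open set must already meet that set. The only point requiring a little care is the intermediate claim $\bigvee D\in\ \!\Uparrow_{s}\!x$: I would justify it from the upper-set property of $\Uparrow_{s}\!x$ (equivalently, from part (2) of the Remark, since $x\ll_{s}z\leq\bigvee D$ already forces $x\ll_{s}\bigvee D$) before invoking Scott-openness, rather than taking it for granted.
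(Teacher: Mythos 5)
Your argument is correct, but it takes a genuinely different route from the paper's. You work topologically: strong continuity (via Theorem~\ref{The-3.5}) gives $\Uparrow_{s}\!x\in\sigma_{s}(L)$, the upper-set property (or Remark~3.4(2)) pushes $\bigvee D$ into $\Uparrow_{s}\!x$, and the inclusion $\sigma_{s}(L)\subseteq\sigma(L)$ lets you invoke plain Scott-openness to conclude that $D$ meets $\Uparrow_{s}\!x$. The paper instead argues order-theoretically: it forms $I=\bigcup_{d\in D}\Downarrow_{s}\!d$, verifies that $I$ is an ideal with $\bigvee I=\bigvee D$ (using directedness of each $\Downarrow_{s}\!d$ and $\bigvee\Downarrow_{s}\!d=d$), observes $x\ll z\leq\bigvee I$ since $\ll_{s}\ \subseteq\ \ll$, and uses continuity of $L$ to place $x$ in the ideal $I$, hence in some $\Downarrow_{s}\!d$. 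Your version is shorter and makes the mechanism transparent --- the whole statement is just Scott-openness of $\Uparrow_{s}\!x$ applied to $D$ --- while the paper's version mirrors the classical interpolation-style argument for $\ll$ in continuous domains and exhibits the auxiliary ideal $\bigcup_{d\in D}\Downarrow_{s}\!d$ explicitly, which is the same device used in the proof of Proposition~\ref{pro-3.7}. Both proofs lean on Theorem~\ref{The-3.5}, just on different clauses of condition~(2).
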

\begin{proof}
Let $I=\bigcup\{\Downarrow_{s}\!d:d\in D\}$. Since $L$ is strongly continuous, $\bigvee I=\bigvee D$ and $I$ is an ideal. If $x\ll_{s}z$ and $z\leq\bigvee D$, then $x\ll z\leq\bigvee D=\bigvee I$. Since strongly continuous domains are continuous domains by definition, $x\in I$. Thus $x\ll_{s}d$ for some $d\in D$.
\end{proof}

\begin{proposition}\label{pro-3.7}
In a strongly continuous domain $L$, $\ll_{s}$ satisfies the interpolation property, i.e., for all $x, y\in L$, $x\ll_sy$ implies that there exists a $z\in L$ such that $x\ll_sz\ll_sy$.
\end{proposition}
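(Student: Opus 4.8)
The plan is to reduce the interpolation property to the immediately preceding proposition by feeding it the right directed set. The key observation is that, in a strongly continuous domain, the characterization in Theorem~\ref{The-3.5} already supplies a canonical directed set approximating $y$, namely $\Downarrow_s y$ itself, whose supremum is $y$. So I would take $D = \Downarrow_s y$ and apply the preceding proposition to it.

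More precisely, fix $x, y \in L$ with $x \ll_s y$. Since $L$ is strongly continuous, Theorem~\ref{The-3.5}(2) gives that $\Downarrow_s y$ is directed and $\sup \Downarrow_s y = y$. Put $D = \Downarrow_s y$; then $D$ is directed with $\bigvee D = y$, and in particular $y \le \bigvee D$. Now invoke the preceding proposition with $z := y$: from $x \ll_s y$ and $y \le \bigvee D$ we obtain some $d \in D$ with $x \ll_s d$. Because $d \in D = \Downarrow_s y$, by definition $d \ll_s y$. Hence $x \ll_s d \ll_s y$, and $z := d$ is the interpolating element required.

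The only genuine content here — upgrading the conclusion from $x \le d$ (which is all the raw definition of $\ll_s$ would directly yield) to $x \ll_s d$ — has already been carried out inside the preceding proposition, whose proof leans on both $\ll_s\ \subseteq\ \ll$ and the fact that every strongly continuous domain is continuous, so that the union $\bigcup\{\Downarrow_s d : d \in D\}$ assembles into an ideal with supremum $\bigvee D$. Consequently the main obstacle is dispatched upstream, and interpolation drops out as a direct corollary obtained by applying that proposition to the approximating family $\Downarrow_s y$.

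Were the preceding proposition unavailable, I would argue directly in the same spirit: form $I = \bigcup\{\Downarrow_s w : w \ll_s y\}$, verify that $I$ is an ideal with $\bigvee I = y$ using strong continuity (each $\Downarrow_s w$ is directed with supremum $w$, and the $w \ll_s y$ range over the directed set $\Downarrow_s y$ with supremum $y$), and then conclude $x \in I$ from $x \ll y = \sup I$, which again delivers an interpolant $z$ with $x \ll_s z \ll_s y$. The crux in either route is the passage through $\ll$ together with directedness of $\Downarrow_s y$, so the whole argument is short precisely because Theorem~\ref{The-3.5} and the preceding proposition have already done the heavy lifting.
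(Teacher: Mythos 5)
Your proof is correct and is essentially the paper's argument in modular form: applying the preceding proposition with $D=\ \Downarrow_{s}\!y$ (and $z=y$) unfolds to exactly the paper's construction of the directed set $\bigcup_{a\in\Downarrow_{s}y}\Downarrow_{s}\!a$ with supremum $y$, from which the interpolant is extracted. The only difference is presentational --- the paper re-derives this directly from the definition of $\ll_{s}$ applied to that doubled-up directed set, whereas you (legitimately, and without circularity, since that proposition precedes this one and does not depend on it) reuse it as a black box.
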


\begin{proof}
Suppose $x,y\in L$ such that $x\ll_{s}y$, we now show that $x\ll_{s}z\ll_{s}y$ for some $z\in L$. Let $D=\bigcup_{a\in\Downarrow_{s}y}\Downarrow_{s}\!a=\{b\in L:$ there is an $a\in L$ with $b\ll_{s}a\ll_{s}y\}$. We claim that $D$ is directed. For any $b_{1},b_{2}\in D$, we have $b_{1}\ll_{s}a_{1}\ll_{s}y$ for some $a_{1}\in L$ and $b_{2}\ll_{s}a_{2}\ll_{s}y$ for some $a_{2}\in L$. Since $\Downarrow_{s}\!y$ is directed, there exists $a_{3}\in\ \!\Downarrow_{s}\!y$ such that $a_{1},a_{2}\leq a_{3}$. Hence $b_{1},b_{2}\ll_{s}a_{3}$. Since $\Downarrow_{s}\!a_{3}$ is directed, there exists $b_{3}\in\ \!\Downarrow_{s}\!a_{3}$ such that $b_{1},b_{2}\leq b_{3}\ll_{s}a_{3}\ll_{s}y$. Thus $b_{3}\in \!D$ and $b_{1},b_{2}\leq b_{3}$. Therefore $D$ is directed. Since $\bigvee D=\bigvee\{\bigvee\Downarrow_{s}\!a:a\in\ \!\Downarrow_{s}\!y\}=\bigvee\{a:a\in\ \!\Downarrow_{s}\!y\}=\bigvee\Downarrow_{s}\!y=y$, $\bigcap_{d\in D}\!\uparrow \!d\subseteq\ \!\uparrow \!y$. It follows from $x\ll_{s}y$ that $\uparrow \!d\subseteq\ \!\uparrow \!x$ for some $d\in \!D$. Thus there exists $a\in L$ such that $d\ll_{s}a\ll_{s}y$. Therefore $x\ll_{s}a\ll_{s}y$.
\end{proof}

Recall that hypercontinuous domains are continuous domains. Now we discuss the relationship between strongly continuous domains and hypercontinuous domains, and the relationship between strongly continuous domains and continuous domains.

\begin{lemma}{\rm(\cite{GHK})} \label{lem3.8}Let $L$ be a dcpo. Then the following conditions are equivalent:
\begin{enumerate}
\item[\rm{(1)}] $L$ is hypercontinuous;
\item[\rm{(2)}] $L$ is continuous and $\ll=\prec$;
\item[\rm{(3)}] $L$ is continuous and $\upsilon(L)=\sigma(L)$.
\end{enumerate}
\end{lemma}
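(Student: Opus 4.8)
The plan is to establish the three equivalences cyclically as $(1)\Rightarrow(3)\Rightarrow(2)\Rightarrow(1)$, leaning on two inclusions valid in every dcpo. The first is $\upsilon(L)\subseteq\sigma(L)$, recorded in the proposition above. The second is $\prec\ \subseteq\ \ll$: if $x\prec y$ then $U:={\rm int}_{\upsilon(L)}\!\uparrow\!x$ is $\upsilon(L)$-open, hence (by the first inclusion) Scott open, and it is an upper set with $y\in U\subseteq\ \uparrow\!x$; so for any directed $D$ whose supremum exists with $y\le\sup D$ we get $\sup D\in U$ since $U$ is an upper set, whence $D\cap U\neq\emptyset$ by Scott openness, and any such $d$ satisfies $x\le d$. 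Thus $x\ll y$. Consequently conditions (2) and (3) carry content only through the reverse inclusions $\ll\ \subseteq\ \prec$ and $\sigma(L)\subseteq\upsilon(L)$ respectively.

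For $(1)\Rightarrow(3)$, hypercontinuity already entails continuity (as recalled before the lemma), so it remains to prove $\sigma(L)\subseteq\upsilon(L)$. I would fix a Scott-open set $U$ and a point $x\in U$; since $x=\bigvee\{y:y\prec x\}$ with the indexing set directed, Scott openness yields some $y\prec x$ lying in $U$. Because $U$ is an upper set, $\uparrow\!y\subseteq U$, and $y\prec x$ gives $x\in{\rm int}_{\upsilon(L)}\!\uparrow\!y\subseteq\ \uparrow\!y\subseteq U$. Thus every point of $U$ has an upper-open neighbourhood contained in $U$, so $U\in\upsilon(L)$.

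For $(3)\Rightarrow(2)$, assuming continuity and $\upsilon(L)=\sigma(L)$, I only need $\ll\ \subseteq\ \prec$. Given $x\ll y$, the crucial input is that on a continuous domain the set $\{z\in L:x\ll z\}$ is Scott open; this is precisely the place where the interpolation property of $\ll$ on a continuous domain is invoked, and it is the main obstacle of the whole proof. Granting it, $\{z:x\ll z\}$ is upper-open by hypothesis and is contained in $\uparrow\!x$, whence $\{z:x\ll z\}\subseteq{\rm int}_{\upsilon(L)}\!\uparrow\!x$; since $y$ belongs to this set, $x\prec y$.

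Finally $(2)\Rightarrow(1)$ is just unwinding definitions: continuity makes $\Downarrow\!x=\{y:y\ll x\}$ directed with $\bigvee\Downarrow\!x=x$ for every $x$, and substituting $\ll\ =\ \prec$ turns this into the statement that $\{y:y\prec x\}$ is directed with supremum $x$, i.e.\ that $L$ is hypercontinuous. The only nonroutine step remains the Scott openness of $\{z:x\ll z\}$ in $(3)\Rightarrow(2)$: from interpolation one finds, for $x\ll\sup D$ with $D$ directed, an element $w$ with $x\ll w\ll\sup D$, so some $d\in D$ dominates $w$ and hence satisfies $x\ll d$, giving $D\cap\{z:x\ll z\}\neq\emptyset$; every other step reduces to the two always-valid inclusions and the definitions.
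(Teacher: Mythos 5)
Your argument is correct. Note that the paper itself gives no proof of this lemma --- it is quoted from Gierz et al.\ (\emph{Continuous Lattices and Domains}, Theorem~III-3.23 together with I-1.9) --- so there is nothing internal to compare against; your cyclic derivation $(1)\Rightarrow(3)\Rightarrow(2)\Rightarrow(1)$, resting on the two always-valid inclusions $\upsilon(L)\subseteq\sigma(L)$ and $\prec\ \subseteq\ \ll$, is essentially the standard textbook route. You correctly isolate the one nontrivial ingredient, namely that $\Uparrow\!x=\{z:x\ll z\}$ is Scott open on a continuous domain, and your reduction of that to the interpolation property of $\ll$ (itself a standard fact you may cite, proved by the same ``union of the $\Downarrow\!a$ for $a\in\ \Downarrow\!y$'' technique that the paper uses for $\ll_s$ in Proposition~\ref{pro-3.7}) is sound. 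No gaps.
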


\begin{theorem}\label{the3.9} Let $L$ be a dcpo. Then the following conditions are equivalent:
\begin{enumerate}
\item[\rm{(1)}] $L$ is hypercontinuous;
\item[\rm{(2)}] $L$ is strongly continuous and $\ll_{s}=\prec$;
\item[\rm{(3)}] $L$ is strongly continuous and $\upsilon(L)=\sigma_{s}(L)$.
\end{enumerate}
\end{theorem}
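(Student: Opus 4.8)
The plan is to make condition (3) the hub and prove the cycle $(1)\Rightarrow(3)\Rightarrow(2)\Rightarrow(1)$, leaning on the $C$-space characterizations already in hand. The single fact that drives everything is extracted from the proof of Theorem \ref{The-3.5}: in any strongly continuous domain $L$ one has $\Uparrow_s x=\mathrm{int}_{\sigma_s(L)}(\uparrow x)$ for every $x\in L$, equivalently $x\ll_s y \iff y\in\mathrm{int}_{\sigma_s(L)}(\uparrow x)$, and since $(L,\sigma_s(L))$ is then a $C$-space the family $\{\Uparrow_s x : x\in L\}$ is a base for $\sigma_s(L)$. On the other side, the definition of $\prec$ says exactly that $\{y : x\prec y\}=\mathrm{int}_{\upsilon(L)}(\uparrow x)$. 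So both relations are ``$y$ lies in the interior of $\uparrow x$'', computed in $\sigma_s(L)$ and in $\upsilon(L)$ respectively, and the whole theorem becomes a comparison of these two topologies and their interior operators.

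For $(1)\Rightarrow(3)$ I would argue as follows. If $L$ is hypercontinuous, then $L$ is continuous and $\upsilon(L)=\sigma(L)$ by Lemma \ref{lem3.8}. Since $\upsilon(L)\subseteq\sigma_s(L)\subseteq\sigma(L)$, this squeezes to $\upsilon(L)=\sigma_s(L)=\sigma(L)$, giving the equation in (3). Moreover hypercontinuity is equivalent to $(L,\upsilon(L))$ being a $C$-space, and as $\sigma_s(L)=\upsilon(L)$ this is the same as $(L,\sigma_s(L))$ being a $C$-space, i.e.\ $L$ is strongly continuous. Hence (3) holds.

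For $(3)\Rightarrow(2)$, $L$ is strongly continuous, so the interior description of $\ll_s$ above is available. Because $\upsilon(L)=\sigma_s(L)$, the two topologies have the same interior operator, so $\mathrm{int}_{\sigma_s(L)}(\uparrow x)=\mathrm{int}_{\upsilon(L)}(\uparrow x)$ for every $x$, and therefore $x\ll_s y \iff y\in\mathrm{int}_{\upsilon(L)}(\uparrow x)\iff x\prec y$; that is, $\ll_s\,=\,\prec$. For $(2)\Rightarrow(1)$, strong continuity again yields the base $\{\Uparrow_s x\}$ of $\sigma_s(L)$; using $\ll_s=\prec$ each basic set is $\Uparrow_s x=\{y:x\prec y\}=\mathrm{int}_{\upsilon(L)}(\uparrow x)\in\upsilon(L)$, so every $\sigma_s(L)$-open set is a union of $\upsilon(L)$-open sets, giving $\sigma_s(L)\subseteq\upsilon(L)$; with the always-valid inclusion $\upsilon(L)\subseteq\sigma_s(L)$ we get $\upsilon(L)=\sigma_s(L)$. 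Then $(L,\upsilon(L))=(L,\sigma_s(L))$ is a $C$-space, so $L$ is hypercontinuous by the $C$-space characterization of hypercontinuity.

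The proof is short once the right lemma is isolated, so the main obstacle is really bookkeeping rather than depth: I must first record, cleanly and as a standalone fact, that in a strongly continuous domain $\Uparrow_s x=\mathrm{int}_{\sigma_s(L)}(\uparrow x)$ and that these sets form a base of $\sigma_s(L)$ (both are implicit in the proof of Theorem \ref{The-3.5} and in the $C$-space definition, but neither was stated as a reusable statement). With that in place every implication collapses to the elementary principle that equal topologies have equal interior operators, combined with the previously established $C$-space criteria for hypercontinuity and strong continuity.
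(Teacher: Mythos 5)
Your argument is correct, and it reorganizes the cycle compared with the paper: you prove $(1)\Rightarrow(3)\Rightarrow(2)\Rightarrow(1)$, whereas the paper proves $(1)\Rightarrow(2)\Rightarrow(3)\Rightarrow(1)$, and the intermediate machinery differs accordingly. The paper's $(1)\Rightarrow(2)$ is order-theoretic: it invokes Lemma~\ref{lem3.8}(2) to get $\ll\,=\,\prec$ and then squeezes the \emph{relations} via $\prec\ \subseteq\ \ll_{s}\ \subseteq\ \ll$ (the first inclusion needing a small check that $\mathrm{int}_{\upsilon(L)}\!\uparrow\!x$ is a strongly Scott open set witnessing $x\ll_{s}y$); its $(2)\Rightarrow(3)$ then uses directedness of $\Downarrow_{s}x$ and Scott-openness of $U$ to produce a $\prec$-witness inside each $\sigma_{s}(L)$-open set. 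You instead squeeze the \emph{topologies} via $\upsilon(L)\subseteq\sigma_{s}(L)\subseteq\sigma(L)$ and Lemma~\ref{lem3.8}(3), and reduce both relation statements to the single identity $\Uparrow_{s}x=\mathrm{int}_{\sigma_{s}(L)}(\uparrow\!x)$ together with the base property of $\{\Uparrow_{s}x\}$ --- facts the paper only records later as Propositions~\ref{pro-4.2} and~\ref{pro-4.3}, though, as you note, they are already contained in the proof of Theorem~\ref{The-3.5}. Your route is more uniformly topological (every step is ``equal topologies have equal interior operators'' plus the two $C$-space characterizations) at the cost of having to state those two facts as standalone lemmas before Theorem~\ref{the3.9}; the paper's route avoids that forward reference but pays with the extra relation-chasing in $(1)\Rightarrow(2)$. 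Both are complete proofs.
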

\begin{proof}
$(1)\Rightarrow(2)$: Let $L$ be a hypercontinuous domain, thus $\ll=\prec$ by Lemma \ref{lem3.8}. Since $\prec\subseteq\ll_{s}\subseteq\ll$, we have $\prec=\ll_{s}$. For any $x\in L$, $\Uparrow_{s}\!x=\{y\in L:x\prec y\}=\textrm{int}_{\upsilon(L)}\!\uparrow \!x\in\upsilon(L)\subseteq\sigma_{s}(L)$.\\
$(2)\Rightarrow(3)$: Obviously, $\upsilon(L)\subseteq\sigma_{s}(L)$. For each $U\in\sigma_{s}(L)$, we show that $U\in\upsilon(L)$. For any $x\in U$, $x=\bigvee^{\uparrow}\!\Downarrow_{s}\!x=\bigvee^{\uparrow}\{y\in L:y\prec x\}\in U\in\sigma_{s}(L)\subseteq\sigma(L)$, hence there is a $y\prec x$ with $y\in U$. Since $y\prec x$, $x\in \textrm{int}_{\upsilon(L)}\!\uparrow \!y$. Then $x\in \textrm{int}_{\upsilon(L)}\!\uparrow \!y\subseteq\ \!\uparrow \!y\subseteq U$. Thus $U\in\upsilon(L)$. Therefore $\upsilon(L)=\sigma_{s}(L)$.\\
$(3)\Rightarrow(1)$: $L$ is strongly continuous iff $(L,\sigma_{s}(L))$ is a $C$-space iff $(L,\upsilon(L))$ is a $C$-space iff $L$ is hypercontinuous.
\end{proof}

\begin{example}\label{example3.10} Let $L=\{0\}\cup\{a_{1},a_{2},\cdots,a_{n},\cdots\}$ with the order generated by
\begin{enumerate}
\item[\rm{(a)}] $a_{n}>0$ for all $n\in \mathbb{N}$;
\item[\rm{(b)}] There is no order relationship between $a_{i}$ and $a_{j}$ for all $i,j\in\mathbb{N}$ (Fig. \ref{Fig.1}).
\end{enumerate}
Then $L$ is a strongly continuous domain, but not a hypercontinuous domain.
\end{example}

\begin{figure}[H]
  \centering
  \includegraphics[width=0.45\textwidth]{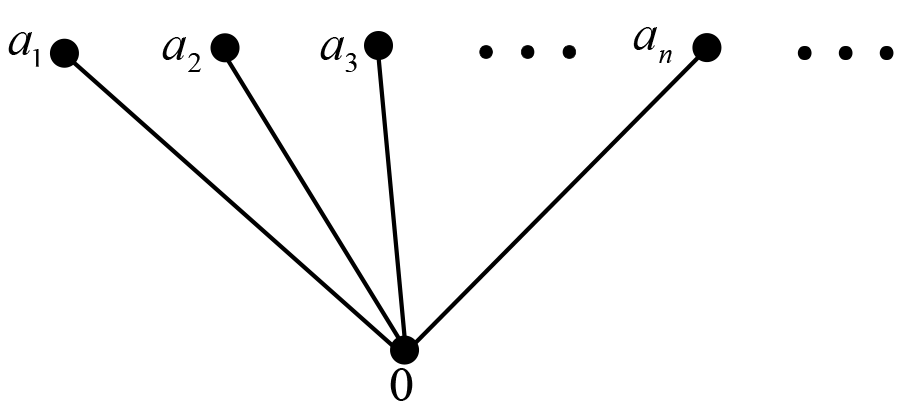}\\
  \caption{The poset $L$.}
  \label{Fig.1}
\end{figure}

\begin{proof}
Since the directed subset in $L$ can only be a single point set or a binary set $\{0,a_{n}\}(n\in\mathbb{N})$, $L$ is a dcpo. It is easy to show that $L$ is not a hypercontinous domain. Now we prove that $L$ is a strongly continuous domain. Let $x\in L$, we now show that $\Downarrow_{s}\!x$ is directed. Obviously, $0\ll_{s}0$, then $\Downarrow_{s}\!0=\{0\}$. For each $n\in\mathbb{N}$, let directed set $D\subseteq L$ and $z\in L$ such that $\bigcap_{d\in D}\uparrow \!d\ \!\cap\uparrow \!z\subseteq\ \!\uparrow \!a_{n}=\{a_{n}\}$. Then there are only the following cases: (\textrm{i}) $D=\{0\}$, $z=\{a_{n}\}$; (ii) $D=\{a_{n}\}$, $z\in L$; (iii) $D=\{a_{m}\}(m\neq n)$, $z\neq\{0\}$ and $z\neq\{a_{m}\}$; (iv) $D=\{0,a_{n}\}$, $z\in L$; (v) $D=\{0,a_{m}\}(m\neq n)$, $z\neq\{0\}$ and $z\neq\{a_{m}\}$. Clearly, there exists $d\in D$ such that $\uparrow \!d\ \!\cap\uparrow \!z\subseteq\ \!\uparrow \!a_{n}=\{a_{n}\}$ in each case. Hence $a_{n}\ll_{s}a_{n}$. therefore $\Downarrow_{s}\!a_{n}=\{0,a_{n}\}$. Thus $\Downarrow_{s}\!x$ is directed and $x=\bigvee\!\Downarrow_{s}\!x$ for all $x\in L$. Now we prove that $\Uparrow_{s}\!x\in\sigma_{s}(L)$. Clearly, $\Uparrow_{s}\!0=L\in\sigma_{s}(L)$. For each $n\in\mathbb{N}$, $\Uparrow_{s}\!a_{n}=\{a_{n}\}$ and $\Uparrow_{s}\!a_{n}\in\sigma^{s}(L)\subseteq\sigma_{s}(L)$ according to the above proof. Therefore $L$ is a strongly continuous domain.
\end{proof}

According to the Theorem \ref{the3.9} and Example \ref{example3.10}, a hypercontinuous domain is a strongly continuous domain, but the converse is not true.

\begin{theorem}\label{the3.11} Let $L$ be a dcpo. Then the following conditions are equivalent:
\begin{enumerate}
\item[\rm{(1)}] $L$ is strongly continuous;
\item[\rm{(2)}] $L$ is continuous, $\ll_{s}=\ll$ and $\Uparrow_{s}\!x\in\sigma_{s}(L)$ for all $x\in L$;
\item[\rm{(3)}] $L$ is continuous and $\sigma_{s}(L)=\sigma(L)$.
\end{enumerate}
\end{theorem}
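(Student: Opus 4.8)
The plan is to establish the cycle of implications $(1)\Rightarrow(2)\Rightarrow(3)\Rightarrow(1)$, mirroring the structure of the proof of Theorem~\ref{the3.9}. The central tool throughout will be the order-theoretic characterization of strong continuity from Theorem~\ref{The-3.5}, namely that $L$ is strongly continuous iff for every $x$ the set $\Downarrow_s x$ is directed, $x=\sup\Downarrow_s x$, and $\Uparrow_s x\in\sigma_s(L)$. Since $\ll_s\subseteq\ll$ always holds by part~(1) of the Remark, most of the work reduces to comparing $\Downarrow_s x$ with $\Downarrow x$ and to the squeeze $\sigma_s(L)\subseteq\sigma(L)$ supplied by the Proposition on $\upsilon(L)\subseteq\sigma^s(L)\subseteq\sigma_s(L)\subseteq\sigma(L)$.

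For $(1)\Rightarrow(2)$, assume $L$ is strongly continuous. Two of the three conclusions come almost for free: $\Uparrow_s x\in\sigma_s(L)$ is immediate from Theorem~\ref{The-3.5}, and continuity of $L$ follows because $\Downarrow_s x\subseteq\Downarrow x\subseteq\downarrow x$ together with $\sup\Downarrow_s x=x$ forces $\sup\Downarrow x=x$, while directedness of $\Downarrow x$ is obtained by pushing any two $u_1,u_2\ll x$ up into the directed set $\Downarrow_s x$ (each $u_i\le$ some $d_i\in\Downarrow_s x$ by the way-below property, then take a common upper bound $d_3\in\Downarrow_s x$). The only delicate point is $\ll=\ll_s$. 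The inclusion $\ll_s\subseteq\ll$ is known, so I would fix $x\ll y$ and use $y=\sup\Downarrow_s y$ with $\Downarrow_s y$ directed: the definition of $\ll$ yields some $d\in\Downarrow_s y$ (i.e.\ $d\ll_s y$) with $x\le d$, whence $x\le d\ll_s y\le y$, and part~(2) of the Remark gives $x\ll_s y$ directly.

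For $(2)\Rightarrow(3)$, continuity gives $\sigma_s(L)\subseteq\sigma(L)$, so it suffices to prove $\sigma(L)\subseteq\sigma_s(L)$. Given $U\in\sigma(L)$ and $x\in U$, continuity together with $\ll=\ll_s$ yields $x=\sup\Downarrow_s x$ with $\Downarrow_s x$ directed, so Scott-openness of $U$ produces $y\ll_s x$ with $y\in U$; then $x\in\Uparrow_s y\subseteq\uparrow y\subseteq U$ with $\Uparrow_s y\in\sigma_s(L)$, exhibiting $U$ as a union of members of $\sigma_s(L)$. Finally $(3)\Rightarrow(1)$ is essentially immediate: if $L$ is continuous and $\sigma_s(L)=\sigma(L)$, then $(L,\sigma_s(L))=(L,\sigma(L))$ is a $C$-space by the topological characterization of continuous domains ($L$ is a domain iff $(L,\sigma(L))$ is a $C$-space), which is exactly Definition~\ref{def-3.1} of $L$ being strongly continuous.

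I expect the main obstacle to be the single inequality $\ll\subseteq\ll_s$ in $(1)\Rightarrow(2)$: unlike the other steps it cannot be read off from the definitions and genuinely relies on approximating $y$ from below by the directed set $\Downarrow_s y$ together with the monotonicity clause of the Remark. Everything else is routine once Theorem~\ref{The-3.5} and the lattice-level inclusion $\sigma_s(L)\subseteq\sigma(L)$ are invoked.
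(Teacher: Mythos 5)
Your proposal is correct and follows essentially the same route as the paper: the cycle $(1)\Rightarrow(2)\Rightarrow(3)\Rightarrow(1)$, with $\ll\subseteq\ll_s$ obtained by approximating $y$ from below via the directed set $\Downarrow_s y$ and the monotonicity clause of the Remark, $\sigma(L)\subseteq\sigma_s(L)$ via the basic opens $\Uparrow_s y$, and $(3)\Rightarrow(1)$ via the $C$-space characterizations. You merely spell out a few details (directedness of $\Downarrow x$) that the paper leaves implicit.
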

\begin{proof}
$(1)\Rightarrow(2)$: Since $L$ is strongly continuous and $\ll_{s}\subseteq\ll$, $L$ is continuous. If $x\ll y=\bigvee^{\uparrow}\{z\in L:z\ll_{s}y\}$, then there exists $z\in\ \!\Downarrow_{s}\!y$ such that $x\leq z$, hence $x\ll_{s}y$. Clearly, $\Uparrow_{s}\!x\in\sigma_{s}(L)$ for all $x\in L$.\\
$(2)\Rightarrow(3)$: We only need prove that $\sigma(L)\subseteq\sigma_{s}(L)$. Let $U\in\sigma(L)$, for each $x\in U$, $x=\bigvee^{\uparrow}\!\Downarrow \!x=\bigvee^{\uparrow}\!\Downarrow_{s}\!x\in U\in\sigma(L)$, then there exists $y\in\ \!\Downarrow_{s}\!x$ such that $y\in U$. Thus $x\in\ \!\Uparrow_{s}\!y\subseteq\ \!\uparrow \!y\subseteq U$. Since $\Uparrow_{s}\!y\in\sigma_{s}(L)$, $U\in\sigma_{s}(L)$.\\
$(3)\Rightarrow(1)$: $L$ is continuous iff $(L,\sigma(L))$ is a $C$-space iff $(L,\sigma_{s}(L))$ is a $C$-space iff $L$ is strongly continuous.
\end{proof}

The following example shows that a continuous domain need not be a strongly continuous domain.
\begin{example}\label{example3.12}(\cite{XZ}) Let $C=\{a_{1},a_{2},\cdots,a_{n},\cdots\}\cup\{\omega_{0}\}$ and $L=C\cup\{b\}\cup\{\omega_{1},\omega_{2},\cdots,\omega_{n},\cdots\}$ with the order generated by
\begin{enumerate}
\item[\rm{(a)}] $a_{1}<a_{2}<\cdots<a_{n}<a_{n+1}<\cdots$;
\item[\rm{(b)}] $a_{n}<\omega_{0}$ for all $n\in\mathbb{N}$;
\item[\rm{(c)}] $b<\omega_{n}$ and $a_{m}<\omega_{n}$ for all $n,m\in\mathbb{N}$ with $m\leq n$ (Fig. \ref{Fig.2}).
\end{enumerate}
\end{example}

\begin{figure}[H]
  \centering
  \includegraphics[width=0.5\textwidth]{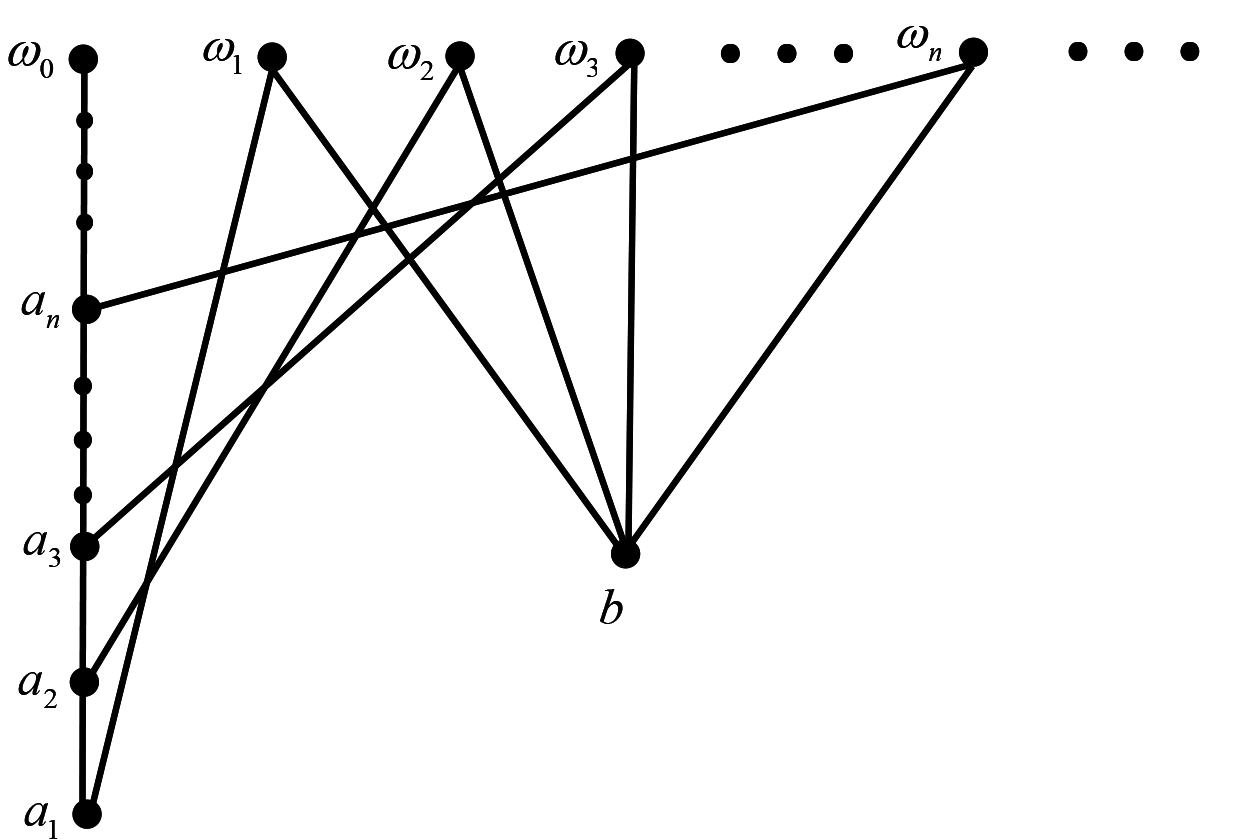}\\
  \caption{The poset $L$.}
  \label{Fig.2}
\end{figure}

Then $L$ is a dcpo and $D$ is a directed set of $L$ iff $D\subseteq C$ or $D$ has a largest element, and hence $x\ll x$ for all $x\in L\setminus\{\omega_{0}\}$. Therefore $L$ is a continuous domain. However, $L$ is not a strongly continuous domain. In fact, choose $D=\{a_{n}:n\in\mathbb{N}\}$, then $\bigcap_{d\in D}\!\uparrow \!\!d\ \!\cap\!\uparrow \!\!b\subseteq\ \!\uparrow \!\!\omega_{1}=\{\omega_{1}\}$, but $\uparrow \!\!a_{n}~\cap\uparrow \!\!b=\{\omega_{n},\omega_{n+1},\cdots\}\nsubseteq\ \!\uparrow\!\!\omega_{1}$ for each $n\in\mathbb{N}$. Hence $\omega_{1}\notin\ \!\Downarrow_{s}\!\!\omega_{1}$. Since $\Downarrow_{s}\!\omega_{1}\subseteq\ \!\downarrow\!\omega_{1}=\{\omega_{1},a_{1},b\}$, $\Downarrow_{s}\!\omega_{1}\subseteq\{a_{1},b\}$, we have $\omega_{1}\neq\bigvee\Downarrow_{s}\!\omega_{1}$.\\

According to the Theorem \ref{the3.11} and Example \ref{example3.12}, a strongly continuous domain is a continuous domain, but the converse is not true. Therefore strongly continuous domains lie strictly between hypercontinuous domains and continuous domains.

\section{Topologies on strongly continuous domains}

In this section, we study some properties of strongly continuous domains endowed with strong Scott topology and strong Lawson topology.

\begin{proposition} \label{pro-4.1}Let $L$ be a dcpo. Then we have the following conditions.
\begin{enumerate}
\item[\rm(1)] $\textrm{cl}_{\sigma_{s}(L)}\{x\}=\ \downarrow \!x$ for all $x\in L$;
\item[\rm(2)] $\sigma_{s}(L)$ is a $T_{0}$-topology;
\item[\rm(3)] If $A=\ \uparrow \!A$, then $A=\bigcap\{U\in\sigma_{s}(L):A\subseteq U\}$.
\end{enumerate}
\end{proposition}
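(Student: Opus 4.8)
The plan is to isolate two structural facts about $\sigma_{s}(L)$ and then read off all three assertions from them. First I would record that every member of the base $\sigma^{s}(L)$ is by definition an upper set, so every $\sigma_{s}(L)$-open set, being a union of such sets, is again an upper set; dually, every $\sigma_{s}(L)$-closed set is a lower set. Second, I would note that for each $x\in L$ the set $L\setminus\downarrow x$ is a subbasic member of $\upsilon(L)$, whence by the inclusion $\upsilon(L)\subseteq\sigma^{s}(L)\subseteq\sigma_{s}(L)$ it is $\sigma_{s}(L)$-open; equivalently, $\downarrow x$ is $\sigma_{s}(L)$-closed for every $x\in L$. These two observations are the only inputs the proof needs.

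For (1), I would argue both inclusions. Since $\textrm{cl}_{\sigma_{s}(L)}\{x\}$ is closed, it is a lower set containing $x$, so it contains $\downarrow x$; conversely, $\downarrow x$ is itself a closed set containing $x$ by the second observation, and the closure is the smallest such set, so $\textrm{cl}_{\sigma_{s}(L)}\{x\}\subseteq\ \downarrow x$. Hence equality. Part (2) then follows immediately: by (1) the specialization order of $\sigma_{s}(L)$ is exactly the given order, so if $\textrm{cl}_{\sigma_{s}(L)}\{x\}=\textrm{cl}_{\sigma_{s}(L)}\{y\}$ then $\downarrow x=\ \downarrow y$, forcing $x=y$ by antisymmetry of $\leq$; this is precisely the $T_{0}$ axiom. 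Concretely, for $x\neq y$ one has, say, $x\not\leq y$, and then $L\setminus\downarrow y$ is an open set containing $x$ but not $y$.

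For (3), the inclusion $A\subseteq\bigcap\{U\in\sigma_{s}(L):A\subseteq U\}$ is trivial. For the reverse, I would take $y\notin A$ and produce a single strongly Scott open set separating $y$ from $A$. The key point is that since $A=\ \uparrow A$, no element of $A$ can lie below $y$: if $a\in A$ with $a\leq y$, then $y\in\ \uparrow a\subseteq\ \uparrow A=A$, contradicting $y\notin A$. Thus $A\cap\downarrow y=\emptyset$, i.e.\ $A\subseteq L\setminus\downarrow y$, while $y\notin L\setminus\downarrow y$, and $L\setminus\downarrow y\in\sigma_{s}(L)$ by the second observation. Hence $y$ lies outside the intersection, giving $\bigcap\{U\in\sigma_{s}(L):A\subseteq U\}\subseteq A$ and therefore equality. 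None of the steps presents a genuine obstacle; the whole proposition rests on the single fact that $\upsilon(L)\subseteq\sigma_{s}(L)$ supplies all complements of principal down-sets as open sets, which is exactly what is needed for the lower-set structure of closed sets and for the separation used in (2) and (3).
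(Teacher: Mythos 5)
Your proof is correct and complete. The paper actually states this proposition without any proof (treating it as routine), and your argument --- resting on the two observations that every strongly Scott open set is an upper set and that $\upsilon(L)\subseteq\sigma_{s}(L)$ makes each $L\setminus\downarrow\!x$ open, hence each $\downarrow\!x$ closed --- is exactly the standard verification the authors leave to the reader; all three parts follow from these two facts just as you describe.
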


\begin{proposition} \label{pro-4.2}Let $L$ be a dcpo. Consider the following two conditions:
\begin{enumerate}
\item[\rm(1)] $y\in \textrm{int}_{\sigma_{s}(L)}\uparrow \!x$;
\item[\rm(2)] $x\ll_{s}y$.
\end{enumerate}
Then $(1)\Rightarrow(2)$; if $L$ is strongly continuous, then $(2)\Rightarrow(1)$, and $(1)$ and $(2)$ are equivalent.
\end{proposition}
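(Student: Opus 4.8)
The plan is to treat the two implications separately, using only the definition of $\ll_{s}$ for the first and Theorem \ref{The-3.5} for the second.

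For $(1)\Rightarrow(2)$, which holds for an arbitrary dcpo, I would begin from $y\in\textrm{int}_{\sigma_{s}(L)}\uparrow \!x$. Since $\sigma^{s}(L)$ is a base for $\sigma_{s}(L)$, there is a basic strongly Scott open set $U\in\sigma^{s}(L)$ with $y\in U\subseteq\ \!\uparrow \!x$. To verify $x\ll_{s}y$, take an arbitrary directed $D\subseteq L$ and $a\in L$ satisfying $\bigcap_{d\in D}\uparrow \!d\ \cap\uparrow \!a\subseteq\ \!\uparrow \!y$. Because $U$ is an upper set containing $y$, we have $\uparrow \!y\subseteq U$, so $\bigcap_{d\in D}\uparrow \!d\ \cap\uparrow \!a\subseteq U$. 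Property (ii) in the definition of strongly Scott open sets then applies to $U$ and yields some $d\in D$ with $\uparrow \!d\ \cap\uparrow \!a\subseteq U\subseteq\ \!\uparrow \!x$, which is exactly the defining condition for $x\ll_{s}y$.

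For $(2)\Rightarrow(1)$ under the hypothesis that $L$ is strongly continuous, I would invoke Theorem \ref{The-3.5}, which supplies $\Uparrow_{s}\!x\in\sigma_{s}(L)$ for every $x\in L$. Since $\ll_{s}\ \subseteq\ \ll\ \subseteq\ \leq$ (the first inclusion is Remark (1), the second is the standard fact that the way-below relation refines the order), we have $\Uparrow_{s}\!x\subseteq\ \!\uparrow \!x$. Assuming $x\ll_{s}y$, that is $y\in\ \!\Uparrow_{s}\!x$, we therefore obtain an open neighbourhood $\Uparrow_{s}\!x$ of $y$ with $y\in\ \!\Uparrow_{s}\!x\subseteq\ \!\uparrow \!x$, so $y\in\textrm{int}_{\sigma_{s}(L)}\uparrow \!x$. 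Combining the two implications gives the asserted equivalence of (1) and (2) in the strongly continuous case.

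Most of this is routine bookkeeping; the one point that needs care is the first implication, where one must pass from a point of the interior to a genuine basic strongly Scott open neighbourhood $U\in\sigma^{s}(L)$ and then match the hypothesis $\bigcap_{d\in D}\uparrow \!d\ \cap\uparrow \!a\subseteq\ \!\uparrow \!y$ against property (ii) of $U$. This alignment is immediate precisely because both the definition of $\ll_{s}$ and property (ii) quantify over the same expression $\bigcap_{d\in D}\uparrow \!d\ \cap\uparrow \!a$ (equivalently $\uparrow\!\bigvee D\ \cap\uparrow \!a$ in a dcpo), so once the neighbourhood $U$ is in hand there is nothing further to compute. The second implication is essentially a direct reading of Theorem \ref{The-3.5}.
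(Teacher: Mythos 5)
Your proof is correct and follows essentially the same route as the paper: the paper's $(1)\Rightarrow(2)$ is exactly your argument (passing to a basic $U\in\sigma^{s}(L)$ with $y\in U\subseteq\textrm{int}_{\sigma_{s}(L)}\!\uparrow\!x$ and applying property (ii) of strongly Scott open sets), appearing inside the proof of Theorem \ref{The-3.5}, and the paper's $(2)\Rightarrow(1)$ is the same one-line use of $\Uparrow_{s}\!x\in\sigma_{s}(L)$ and $\Uparrow_{s}\!x\subseteq\ \!\uparrow\!x$. No gaps.
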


\begin{proof}
$(1)\Rightarrow(2)$: By the proof of Theorem \ref{The-3.5}.\\
$(2)\Rightarrow(1)$: Let $x\ll_{s}y$, then $y\in\ \!\Uparrow_{s}\!\!x\subseteq\ \!\uparrow \!\!x$. Since $L$ is strongly continuous, $\Uparrow_{s}\!x\in\sigma_{s}(L)$. Hence $y\in\ \!\Uparrow_{s}\!x\subseteq\textrm{int}_{\sigma_{s}(L)}\uparrow \!x$.
\end{proof}

\begin{proposition} \label{pro-4.3}Let $L$ be a strongly continuous domain. Then we have the following conditions.
\begin{enumerate}
\item[\rm(1)] If $U=\ \!\uparrow \!U$, then $U\in\sigma_{s}(L)$ iff there is a $u\in U$ such that $x\in\ \!\Uparrow_{s}\!u$ for all $x\in U$;
\item[\rm(2)] $\{\Uparrow_{s}\!u:u\in L\}$ form a basis for the strong Scott topology;
\item[\rm(3)] $\textrm{int}_{\sigma_{s}(L)}\uparrow \!x=\ \!\Uparrow_{s}\!x$ for all $x\in L$;
\item[\rm(4)] For any subset $X\subseteq L$, $\textrm{int}_{\sigma_{s}(L)}X=\bigcup\{\Uparrow_{s}\!u:\Uparrow_{s}\!u\subseteq X\}$.
\end{enumerate}
\end{proposition}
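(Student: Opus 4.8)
The plan is to treat part (3) as the engine and derive the other three parts from it together with two facts already available: the $C$-space property of $(L,\sigma_{s}(L))$ (Definition~\ref{def-3.1}), and the fact from Theorem~\ref{The-3.5} that $\Uparrow_{s}u\in\sigma_{s}(L)$ for every $u\in L$. I would prove (3) first, then (1), and finally read off (2) and (4).

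First I would establish (3). Since $L$ is strongly continuous, Proposition~\ref{pro-4.2} gives the equivalence $y\in\textrm{int}_{\sigma_{s}(L)}\!\uparrow\!x \iff x\ll_{s}y$ for all $y$. The set of $y$ satisfying the right-hand side is exactly $\Uparrow_{s}x$, so $\textrm{int}_{\sigma_{s}(L)}\!\uparrow\!x=\Uparrow_{s}x$; this is essentially a set-theoretic restatement of Proposition~\ref{pro-4.2}.

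Next I would prove (1), read with the quantifiers in the order ``for every $x\in U$ there is $u\in U$ with $x\in\Uparrow_{s}u$'' (a single $u$ serving all $x$ would already fail on the antichain of Example~\ref{example3.10}). For the forward direction, assume $U=\ \uparrow\!U$ lies in $\sigma_{s}(L)$ and fix $x\in U$; since $(L,\sigma_{s}(L))$ is a $C$-space there is $u\in U$ with $x\in\textrm{int}_{\sigma_{s}(L)}\!\uparrow\!u\subseteq\ \uparrow\!u\subseteq U$, which by (3) says $x\in\Uparrow_{s}u$ with $u\in U$. For the converse, suppose that for each $x\in U$ we may choose $u_{x}\in U$ with $x\in\Uparrow_{s}u_{x}$. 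Because $u_{x}\in U$ and $U=\ \uparrow\!U$, we get $\Uparrow_{s}u_{x}\subseteq\ \uparrow\!u_{x}\subseteq U$, whence $U=\bigcup_{x\in U}\Uparrow_{s}u_{x}$. By Theorem~\ref{The-3.5} each $\Uparrow_{s}u_{x}\in\sigma_{s}(L)$, so $U$ is a union of strong Scott open sets and therefore $U\in\sigma_{s}(L)$.

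Parts (2) and (4) then follow quickly. For (2), each $\Uparrow_{s}u$ is open by Theorem~\ref{The-3.5}, and any $U\in\sigma_{s}(L)$ satisfies $U=\bigcup\{\Uparrow_{s}u:\Uparrow_{s}u\subseteq U\}$: for $x\in U$ the $C$-space property supplies $u\in U$ with $x\in\textrm{int}_{\sigma_{s}(L)}\!\uparrow\!u\subseteq\ \uparrow\!u\subseteq U$, which by (3) is $x\in\Uparrow_{s}u\subseteq U$. Hence $\{\Uparrow_{s}u:u\in L\}$ is a base. Part (4) is then the standard ``interior equals the union of base members contained in the set'': I would check $\supseteq$ because each $\Uparrow_{s}u\subseteq X$ is an open subset of $X$, and $\subseteq$ because any point of $\textrm{int}_{\sigma_{s}(L)}X$ lies, by (2) applied to the open set $\textrm{int}_{\sigma_{s}(L)}X$, in some $\Uparrow_{s}u\subseteq\textrm{int}_{\sigma_{s}(L)}X\subseteq X$. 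I do not expect a deep obstacle here: every part reduces to Proposition~\ref{pro-4.2}, the $C$-space property, and the openness of $\Uparrow_{s}u$. The only point needing genuine care is part (1), where one must fix the quantifier order correctly and, in the converse direction, explicitly invoke Theorem~\ref{The-3.5} to know each $\Uparrow_{s}u$ is open before assembling the union; the remaining steps are routine bookkeeping with $\ll_{s}$.
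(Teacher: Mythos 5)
Your proof is correct and follows essentially the same route as the paper: part (3) is Proposition~\ref{pro-4.2} restated, the forward direction of (1) uses strong continuity (the paper goes through $x=\bigvee\Downarrow_{s}x$ together with Scott-openness of $U$, where you invoke the $C$-space definition plus (3) directly, but these are the two equivalent faces of Theorem~\ref{The-3.5}), the converse of (1) is a union of the open sets $\Uparrow_{s}u$, and (2) and (4) are the routine consequences. Your reading of the quantifier order in (1) agrees with what the paper's own proof establishes.
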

\begin{proof}
(1) For each $x\in U$, since $L$ is strongly continuous, $\Downarrow_{s}\!\!x$ is directed and $x=\bigvee\Downarrow_{s}\!x$. And hence $x=\bigvee\Downarrow_{s}\!x \in\sigma_{s}(L)\subseteq\sigma(L)$, thus $\Downarrow_{s}\!x\cap U\neq\emptyset$. Therefore there exists $u\in U$ such that $x\in\ \!\Uparrow_{s}\!u$. On the other hand, it is trivial.\\
(2) It follows directly from (1).\\
(3) It follows from Proposition \ref{pro-4.2}.\\
(4) On the one hand, obviously, $\bigcup\{\Uparrow_{s}\!\!u:\ \!\Uparrow_{s}\!\!u\subseteq X\}\subseteq X$ and $\bigcup\{\Uparrow_{s}\!\!u:\ \!\Uparrow_{s}\!\!u\subseteq X\}\in\sigma_{s}(L)$, then $\bigcup\{\Uparrow_{s}\!\!u:\ \!\Uparrow_{s}\!\!u\subseteq X\}\subseteq\textrm{int}_{\sigma_{s}(L)}X$. On the other hand, for each $y\in\textrm{int}_{\sigma_{s}(L)}X$, there exists $u\in\textrm{int}_{\sigma_{s}(L)}X$ such that $y\in\ \!\Uparrow_{s}\!u\subseteq\textrm{int}_{\sigma_{s}(L)}X$. Thus $\Uparrow_{s}\!u=\textrm{int}_{\sigma_{s}(L)}\uparrow \!u\subseteq\ \!\uparrow \!u\subseteq\textrm{int}_{\sigma_{s}(L)}X\subseteq X$. Therefore $\textrm{int}_{\sigma_{s}(L)}X\subseteq\bigcup\{\Uparrow_{s}\!u:\Uparrow_{s}\!u\subseteq X\}$.
\end{proof}

Now we recall the definitions of \emph{prime elements}, \emph{sober spaces}. An element $p$ in a poset $L$ is called \emph{prime} iff $p=1$ or $L\setminus\downarrow p$ is a filter. An element is \emph{co-prime} iff it is prime of $L^{\textrm{op}}$. The sets of all prime and co-prime elements of $L$ are denoted by $\textrm{PRIME}(L)$ and $\textrm{COPRIME}(L)$, respectively. A nonempty subset $A$ of a $T_{0}$ space $X$ is said to be \emph{irreducible} if for any $\{B,C\}\subseteq \mathcal{C}(X)$, $A\subseteq B\cup C$ implies $A\subseteq B$ or $A\subseteq C$. Obviously, $\textrm{cl}\{x\}$ is a irreducible closed set for all $x\in X$. A space is called \emph{sober}, if for any irreducible closed set $C$, there is a unique point $x\in X$ such that $C=\textrm{cl}\{x\}$.

\begin{lemma}\label{lem4.4}{\rm(\cite{GHK})} Let $X$ be a space and $A$ a subset of $X$. Then the following conditions are equivalent:
\begin{enumerate}
\item[\rm{(1)}] $A$ is a irreducible closed set of $X$;
\item[\rm{(2)}] $A$ is a co-prime in lattice $\mathcal{C}(X)$;
\item[\rm{(3)}] $X\setminus A$ is a prime in lattice $\mathcal{O}(X)$;
\item[\rm{(4)}] For any $U,V\in\mathcal{O}(X)$, if $U\cap A\neq\emptyset$ and $V\cap A\neq\emptyset$, then $U\cap V\cap A\neq\emptyset$.
\end{enumerate}
\end{lemma}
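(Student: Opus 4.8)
The plan is to exploit the complementation bijection $A\mapsto X\setminus A$ between $\mathcal{C}(X)$ and $\mathcal{O}(X)$ and to unfold the definitions of \emph{prime} and \emph{co-prime} recalled just above the lemma, reducing every clause to a concrete statement about intersections and unions. Throughout I would keep to the standing reading that $A$ is a nonempty closed subset, so that clauses (1) and (2) are meaningful. I would establish the cycle $(1)\Leftrightarrow(4)\Leftrightarrow(3)\Leftrightarrow(2)$. For $(1)\Leftrightarrow(4)$, I would write each open set as the complement of a closed set, $U=X\setminus B$ and $V=X\setminus C$. Using $X\setminus(B\cup C)=U\cap V$ together with $A\subseteq B\iff A\cap U=\emptyset$ and $A\subseteq B\cup C\iff A\cap U\cap V=\emptyset$, the implication defining irreducibility, namely $A\subseteq B\cup C\Rightarrow A\subseteq B$ or $A\subseteq C$, turns into exactly its contrapositive form in (4), that $A\cap U\neq\emptyset$ and $A\cap V\neq\emptyset$ force $A\cap U\cap V\neq\emptyset$. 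Since $B\mapsto X\setminus B$ is a bijection of closed sets onto open sets, quantifying over all closed $B,C$ is the same as quantifying over all open $U,V$, so the two conditions coincide.

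Next, for $(3)\Leftrightarrow(4)$, I would put $W=X\setminus A$ and unfold ``$W$ is prime in $\mathcal{O}(X)$'': by definition this means $W=X$ or $\mathcal{O}(X)\setminus{\downarrow}W$ is a filter. Here $\mathcal{O}(X)\setminus{\downarrow}W=\{U\in\mathcal{O}(X):U\not\subseteq W\}=\{U:U\cap A\neq\emptyset\}$, using $U\subseteq W\iff U\cap A=\emptyset$. This family is automatically an upper set, and it is nonempty because $X$ belongs to it, so being a filter reduces to being filtered; since the binary meet of $U,V$ in $\mathcal{O}(X)$ is $U\cap V$, filteredness says precisely that $U\cap A\neq\emptyset$ and $V\cap A\neq\emptyset$ imply $U\cap V\cap A\neq\emptyset$, which is (4).

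Finally, $(2)\Leftrightarrow(3)$ follows because complementation is an order-reversing bijection, hence a lattice anti-isomorphism, between $\mathcal{C}(X)$ and $\mathcal{O}(X)$; under such a map co-prime elements (the primes of the opposite lattice) correspond to prime elements, so $A$ is co-prime in $\mathcal{C}(X)$ iff $X\setminus A$ is prime in $\mathcal{O}(X)$. Alternatively one can unfold co-primeness directly: $A$ is co-prime iff $A=\emptyset$ or $\mathcal{C}(X)\setminus{\uparrow}A$ is an ideal, and since binary joins in $\mathcal{C}(X)$ are ordinary unions of closed sets, the directedness of this ideal is once more just the irreducibility of $A$. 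The main obstacle I anticipate is bookkeeping rather than depth: one must keep straight that \emph{prime} and \emph{co-prime} are defined through the opposite poset, and that the relevant binary meets and joins in $\mathcal{O}(X)$ and $\mathcal{C}(X)$ are exactly intersection and union, so that the abstract filter/ideal conditions collapse to the set-theoretic ones. The only genuinely delicate point is the extremal elements, the top $X$ of $\mathcal{O}(X)$ and the bottom $\emptyset$ of $\mathcal{C}(X)$, where the explicit clauses ``$W=X$'' and ``$A=\emptyset$'' in the definitions do not match the nonemptiness demanded by irreducibility; I would dispose of this by the convention that $A$ is nonempty and closed.
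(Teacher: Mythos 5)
The paper offers no proof of this lemma; it is quoted from \cite{GHK} without argument, so there is no in-paper proof to compare against. Your argument is correct and complete: the cycle $(1)\Leftrightarrow(4)\Leftrightarrow(3)\Leftrightarrow(2)$, obtained by passing through complementation and unfolding the paper's definitions of prime and co-prime (using that binary meets in $\mathcal{O}(X)$ and binary joins in $\mathcal{C}(X)$ are just intersection and union), goes through as written. You also correctly isolate the one genuine subtlety, namely that for $A=\emptyset$ conditions (2)--(4) hold vacuously (via the ``$p=1$'' clause in the definition of prime) while (1) fails, so the statement must indeed be read with $A$ nonempty and closed, which is the reading the paper uses when it applies the lemma.
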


\begin{definition}
Let $L$ be a dcpo. The set of all strong Scott topology open filters of $L$ is denoted by $\textrm{SOFilt}(L)$, i.e., $U\in \textrm{SOFilt}(L)$ iff $U\in\sigma_{s}(L)$ and $U$ is a filter.
\end{definition}

\begin{proposition} \label{pro-4.6}Let $L$ be a dcpo and $U\in\sigma_{s}(L)$.
\begin{enumerate}
\item[\rm(1)] $U$ is a co-prime in $\sigma_{s}(L)$ iff $U\in \textrm{SOFilt}(L)$;
\item[\rm(2)] $L\setminus \downarrow \!a\in\textrm{PRIME}(\sigma_{s}(L))$ for all $a\in L$; For each $U\in\textrm{PRIME}(\sigma_{s}(L)), U\neq L$, if $L$ is strongly continuous, then there exists $a\in L$ such that $U=L\setminus \downarrow \!a$.
\end{enumerate}
\end{proposition}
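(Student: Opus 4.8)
The whole argument rests on reading $\sigma_{s}(L)$ as the open-set lattice $\mathcal{O}(\Sigma_{s}L)$ of the strong Scott space, so that Lemma~\ref{lem4.4} becomes a dictionary between primes (resp.\ co-primes) of $\sigma_{s}(L)$ and complements of irreducible closed sets (resp.\ irreducible closed sets). I would treat the two items separately; only the second half of (2) needs strong continuity. For (1) I argue both inclusions directly. If $U\in\mathrm{SOFilt}(L)$ and $U\subseteq V\cup W$ with $V,W\in\sigma_{s}(L)$, assume $U\not\subseteq V$ and $U\not\subseteq W$, pick $x\in U\setminus V$ and $y\in U\setminus W$, and use filteredness of $U$ to get $z\in U$ with $z\le x$ and $z\le y$; since $z\in V\cup W$ and both sets are upper sets, $z$ forces $x\in V$ or $y\in W$, a contradiction, so $U$ is co-prime. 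Conversely, if $U\in\sigma_{s}(L)$ is co-prime it is already an upper set, and to get filteredness I test co-primality against $V=L\setminus\downarrow x$ and $W=L\setminus\downarrow y$ for $x,y\in U$ (both in $\upsilon(L)\subseteq\sigma_{s}(L)$): here $U\subseteq V\cup W$ says precisely $U\cap\downarrow x\cap\downarrow y=\emptyset$, so co-primality yields a common lower bound of $x,y$ inside $U$. The bottom element $\emptyset$, which the stated definition still counts as co-prime, is the only exception and is disposed of by the standing convention that filters are nonempty.

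For the first assertion of (2), Proposition~\ref{pro-4.1}(1) gives $\downarrow a=\mathrm{cl}_{\sigma_{s}(L)}\{a\}$, which is an irreducible closed set, so by Lemma~\ref{lem4.4} its complement $L\setminus\downarrow a$ is prime in $\sigma_{s}(L)$ (and $L\setminus\downarrow a\in\upsilon(L)\subseteq\sigma_{s}(L)$, so it is a genuine lattice element); no continuity is used. The substance is the converse. Given $U\in\mathrm{PRIME}(\sigma_{s}(L))$ with $U\ne L$, set $A=L\setminus U$; by Lemma~\ref{lem4.4} $A$ is irreducible closed, hence a nonempty lower set, and since $\sigma_{s}(L)\subseteq\sigma(L)$ it is even Scott closed, so it is closed under directed sups.

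The plan is to produce the top element as $a=\sup P$, where $P=\{u\in L:\ \Uparrow_{s}u\cap A\neq\emptyset\}=\{u\in L:\ u\ll_{s}w\ \text{for some}\ w\in A\}$; here Theorem~\ref{The-3.5} makes each $\Uparrow_{s}u$ strongly Scott open and Propositions~\ref{pro-4.2} and \ref{pro-4.3} let me read off memberships. First $P\subseteq A$, since $u\in P$ gives $u\le w$ for some $w\in A$ and $A$ is a lower set. Next $P$ is an ideal: it is a lower set by part~(2) of the Remark on $\ll_{s}$, and it is directed because for $u_{1},u_{2}\in P$ irreducibility (Lemma~\ref{lem4.4}(4)) yields $w\in A$ with $u_{1},u_{2}\ll_{s}w$, after which directedness of $\Downarrow_{s}w$ supplies $u_{3}\ll_{s}w$ with $u_{1},u_{2}\le u_{3}$, so $u_{3}\in P$. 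Note $P\neq\emptyset$ (any $u_{0}\ll_{s}w_{0}$ with $w_{0}\in A$ lies in $P$). As $A$ is a lower set closed under directed sups and $P\subseteq A$ is directed and nonempty, $a=\sup P$ exists and $a\in A$, whence $\downarrow a\subseteq A$. For the reverse inclusion take any $w\in A$ and use strong continuity, $w=\sup\Downarrow_{s}w$: every $u\ll_{s}w$ lies in $P$ and hence $u\le a$, so $w\le a$. Thus $A\subseteq\downarrow a$, giving $A=\downarrow a$ and $U=L\setminus\downarrow a$.

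I expect the directedness of $P$ to be the main obstacle, and it is exactly the step where strong continuity is indispensable: the raw output of irreducibility only provides an element $w\in A$ lying above $u_{1},u_{2}$, and one must re-descend through the directed set $\Downarrow_{s}w$ to return to a point of $P$ dominating $u_{1},u_{2}$. Once $P$ is known to be an ideal contained in $A$, the identification $A=\downarrow\!\sup P$ is forced by the directed completeness of the Scott-closed set $A$ together with the approximation $w=\sup\Downarrow_{s}w$ supplied by strong continuity.
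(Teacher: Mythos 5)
Your proof is correct and follows essentially the same route as the paper's: part (1) is argued identically by testing co-primality against the subbasic opens $L\setminus\downarrow\!x$, and in part (2) your set $P$ coincides with the paper's $A^{\ast}=\bigcup\{\Downarrow_{s}\!a:a\in A\}$, whose directedness is obtained from primality of $U$ in the same way (the paper interpolates a basic open $\Uparrow_{s}\!d$ inside $\Uparrow_{s}\!b\,\cap\Uparrow_{s}\!c$ where you invoke directedness of $\Downarrow_{s}\!w$ --- an immaterial difference), after which $a=\sup P$ is identified as the top of $A$ exactly as in the paper. Your side remark that $\emptyset$ is formally co-prime but not a filter flags a harmless edge case that the paper's proof also leaves unaddressed.
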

\begin{proof}
(1):  On the one hand, suppose that $U\in\textrm{COPRIME}(\sigma_{s}(L))$, we only need to show that $U$ is a filter. For any $x,y\in U$, assume that $U\ \!\cap \downarrow \!x\ \!\cap \downarrow \!y=\emptyset$, then $U\subseteq(L\setminus \downarrow x)\cup(L\setminus \downarrow y)$. Since $U\in\textrm{COPRIME}(\sigma_{s}(L))$, $U\subseteq L\setminus \downarrow x$ or $U\subseteq L\setminus \downarrow y$, which is in contradiction with $U$ is an upper set. Therefore there is a $z\in U$ such that $z\leq x$ and $z\leq y$. On the other hand, suppose that $U\in\textrm{SOFilt}(L)$, if $V,W\in\sigma_{s}(L)$ and $U\nsubseteq V,U\nsubseteq W$, we only need check $U\nsubseteq V\cup W$. Since $U\nsubseteq V$ and $U\nsubseteq W$, then there exists $v\in U\setminus V$ and $w\in U\setminus W$. Note that $U$ is a filter, hence there exists $u\in U$ such that $u\leq v,w$, then $u\in U\setminus(V\cup W)$. Therefore $U\nsubseteq V\cup W$.\\
(2): By Proposition \ref{pro-4.1} and Lemma \ref{lem4.4}, $L\setminus\downarrow \!a=L\setminus \textrm{cl}_{\sigma_{s}(L)}\{a\}\in\textrm{PRIME}(\sigma_{s}(L))$ for all $a\in L$. Conversely, for each $U\in\textrm{PRIME}(\sigma_{s}(L))$ and $U\neq L$, let $A=L\setminus U$, then $A$ is an irreducible closed set in $(L,\sigma_{s}(L))$. We have to show that $A$ has a largest element $e$; since $A$ is a lower set, this will show that $A=\ \!\downarrow \!e$ as desired. Let $A^{\ast}=\bigcup\{\Downarrow_{s}a:a\in A\}=\ \!\Downarrow_{s}\!A\subseteq\ \!\downarrow \!A=A$,  we claim that $A^{\ast}$ is directed. For each $b,c\in A^{\ast}$, there exists $a_{b},a_{c}\in A$ such that $b\ll_{s}a_{b}$ and $c\ll_{s}a_{c}$, respectively. We first show that $\Uparrow_{s}\!b\ \!\cap\Uparrow_{s}\!c\cap A\neq\emptyset$, if not, then $\Uparrow_{s}\!b\ \!\cap\Uparrow_{s}\!c\subseteq U$, but $\Uparrow_{s}\!b, \Uparrow_{s}\!c\in\sigma_{s}(L)$ and $U\in\textrm{PRIME}(\sigma_{s}(L))$, then $\Uparrow_{s}\!b\subseteq U$ or $\Uparrow_{s}\!c\subseteq U$. But $\Uparrow_{s}b$ contains an $a_{b}\in A=L\setminus U$ which is impossible, similarly $\Uparrow_{s}\!c\subseteq U$ is impossible. Pick $a\in\ \!\Uparrow_{s}\!b\ \!\cap\Uparrow_{s}\!c\cap A$, since $\Uparrow_{s}\!b\ \!\cap\Uparrow_{s}\!c\in \sigma_{s}(L)$, there exists $d\in\ \!\Uparrow_{s}\!b\ \!\cap\Uparrow_{s}\!c$ such that $a\in\ \!\Uparrow_{s}\!d\subseteq\ \!\Uparrow_{s}\!b\ \!\cap\Uparrow_{s}\!c$. Thus $d\in A^{\ast}$ and $b,c\leq d$. Hence $A^{\ast}$ is directed and $\bigvee A^{\ast}$ exists. Note that $A^{\ast}\subseteq A$ and $A$ is a strong Scott topology closed set, then $A$ is also a Scott closed set. Let $e=\bigvee A^{\ast}$, then $e=\bigvee A^{\ast}\in A$. Now we show that $e$ is the largest element in $A$. For any $x\in A$, since $\Downarrow_{s}\!x\subseteq A^{\ast}$, $x=\bigvee\Downarrow_{s}\!x\leq\bigvee A^{\ast}=e$.
\end{proof}

\begin{corollary}
Let $L$ be a strongly continuous domain. Then $(L,\sigma_{s}(L))$ is a sober space.
\end{corollary}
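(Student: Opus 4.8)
The plan is to verify directly the defining property of a sober space: for each irreducible closed set $C$ of $(L,\sigma_{s}(L))$ I must produce a point whose closure is $C$, and show this point is unique. The whole argument rests on translating ``irreducible closed'' into the dual language of prime open sets, where the structural work already carried out in Proposition \ref{pro-4.6} can be applied verbatim.

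First I would take an arbitrary irreducible closed set $C$ of $(L,\sigma_{s}(L))$. By definition $C$ is nonempty, so $U=L\setminus C$ is a proper open set. By the equivalence $(1)\Leftrightarrow(3)$ of Lemma \ref{lem4.4}, the assertion that $C$ is irreducible and closed is exactly the assertion that $U\in\textrm{PRIME}(\sigma_{s}(L))$, and $C\neq\emptyset$ forces $U\neq L$. This is precisely where strong continuity enters: by the second half of Proposition \ref{pro-4.6}(2), every proper prime open set of a strongly continuous domain has the form $L\setminus\downarrow \!a$ for some $a\in L$. Hence $C=\ \downarrow \!a$, and since $\textrm{cl}_{\sigma_{s}(L)}\{a\}=\ \downarrow \!a$ by Proposition \ref{pro-4.1}(1), I obtain $C=\textrm{cl}_{\sigma_{s}(L)}\{a\}$, which establishes existence of a generic point.

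For uniqueness, suppose $\textrm{cl}_{\sigma_{s}(L)}\{a\}=\textrm{cl}_{\sigma_{s}(L)}\{b\}$; then $\downarrow \!a=\ \downarrow \!b$, whence $a=b$. Equivalently, this is just the fact that $\sigma_{s}(L)$ is a $T_{0}$-topology, recorded in Proposition \ref{pro-4.1}(2). Combining existence and uniqueness shows that $(L,\sigma_{s}(L))$ is sober. I do not expect any genuine obstacle at this stage: all the difficulty has already been absorbed into Proposition \ref{pro-4.6}(2), whose construction of the largest element $e=\bigvee A^{\ast}$ of the corresponding irreducible closed set is the real content. The corollary merely repackages that result through the order--topology dictionary furnished by Lemma \ref{lem4.4} and Proposition \ref{pro-4.1}, so the ``proof'' amounts to assembling these three facts in the right order.
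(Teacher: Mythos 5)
Your proposal is correct and follows exactly the paper's route: the paper's proof is the one-line ``By Lemma \ref{lem4.4} and Proposition \ref{pro-4.6}'', and your argument simply spells out that combination, using Lemma \ref{lem4.4}(1)$\Leftrightarrow$(3) to identify irreducible closed sets with complements of proper prime opens, Proposition \ref{pro-4.6}(2) to write each such open as $L\setminus\downarrow\!a$, and Proposition \ref{pro-4.1} for $\textrm{cl}_{\sigma_{s}(L)}\{a\}=\ \downarrow\!a$ and the $T_{0}$ property giving uniqueness. No gaps; this is the intended proof made explicit.
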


\begin{proof}
By Lemma \ref{lem4.4} and Proposition \ref{pro-4.6}.
\end{proof}

\begin{corollary}
Let $L$ be a strongly continuous domain. Then $(L,\sigma_{s}(L))$ is a locally compact sober space. If $L$ has a smallest element, then $(L,\sigma_{s}(L))$ is compact.
\end{corollary}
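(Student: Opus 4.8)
The plan is to exploit a single structural fact---that every member of $\sigma_{s}(L)$ is an upper set---together with the $C$-space property defining strong continuity. Sobriety is already supplied by the preceding corollary, so I would only need to establish local compactness and, under the hypothesis of a least element, compactness.

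First I would record that every strong Scott open set is an upper set: the base $\sigma^{s}(L)$ consists of upper sets by definition, and an arbitrary union of upper sets is again an upper set. This yields a convenient compactness principle: for any $u\in L$, the set $\uparrow \!u$ is compact in $(L,\sigma_{s}(L))$, since any open cover of $\uparrow \!u$ has some member $V$ with $u\in V$, and $V$ being an upper set forces $\uparrow \!u\subseteq V$, so the cover has a one-element subcover.

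For local compactness I would take $x\in L$ and an open neighborhood $U\in\sigma_{s}(L)$ of $x$. As $L$ is strongly continuous, $(L,\sigma_{s}(L))$ is a $C$-space, so by Definition \ref{def-3.1} there exists $u\in U$ with $x\in\textrm{int}_{\sigma_{s}(L)}\!\uparrow \!u\subseteq\ \!\uparrow \!u\subseteq U$. By the previous paragraph $\uparrow \!u$ is compact, and $\textrm{int}_{\sigma_{s}(L)}\!\uparrow \!u=\ \!\Uparrow_{s}\!u$ (Proposition \ref{pro-4.3}(3)) is an open set containing $x$. Thus $\uparrow \!u$ is a compact neighborhood of $x$ contained in $U$, which is exactly local compactness. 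Combined with sobriety from the previous corollary, $(L,\sigma_{s}(L))$ is a locally compact sober space.

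Finally, for the compactness claim, suppose $L$ has a least element $0$, so that $L=\ \!\uparrow \!0$. Given any open cover of $L$, some member $U$ contains $0$; since $U$ is an upper set this gives $L=\ \!\uparrow \!0\subseteq U$, whence $U=L$ and the cover admits a trivial one-element subcover. Hence $(L,\sigma_{s}(L))$ is compact. No step presents a genuine obstacle; the only points requiring care are to phrase local compactness in the non-Hausdorff sense (a compact neighborhood inside each open neighborhood) and to confirm that strong Scott open sets really are upper sets, which is immediate from the definition of $\sigma^{s}(L)$.
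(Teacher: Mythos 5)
Your proposal is correct and follows essentially the same route as the paper: both use the $C$-space property of $(L,\sigma_{s}(L))$ to place a set of the form $\uparrow\!u$ between $x$ and $U$, observe that $\uparrow\!u$ is compact because strong Scott open sets are upper sets, and handle the compactness claim via the least element in the identical way. Your explicit justification that members of $\sigma_{s}(L)$ are upper sets is a nice touch of added care but does not change the argument.
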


\begin{proof}
Suppose that $x\in U\in\sigma_{s}(L)$, by Proposition \ref{pro-4.3}, there exists $y\in U$ such that $x\in\ \!\Uparrow_{s}\!y\subseteq\ \!\uparrow \!y\subseteq U$. Note that $\uparrow \!y$ is compact with respect to any topology whose open sets are upper sets, thus the assertion is proved. If $L$ has a smallest element, let $\{U_{i}:i\in I\}\subseteq\sigma_{s}(L)$ and $L\subseteq\bigcup_{i\in I}U_{i}$, then there exists $i\in I$ such that $0\in U_{i}$, hence $L=\ \!\uparrow \!0\subseteq U_{i}$. Therefore $(L,\sigma_{s}(L))$ is compact.
\end{proof}

\begin{theorem}\label{the4.9} Let $L$ be a dcpo. Then the following conditions are equivalent:
\begin{enumerate}
\item[\rm{(1)}] $L$ is strongly continuous;
\item[\rm{(2)}] For each $x\in L$, $\Uparrow_{s}\!x\in\sigma_{s}(L)$ and if $U\in\sigma_{s}(L)$, then $U=\bigcup\{\Uparrow_{s}\!u:u\in U\}$;
\item[\rm{(3)}] $\textrm{SOFilt}(L)$ is a basis of $\sigma_{s}(L)$ and $\sigma_{s}(L)$ is a continuous lattice;
\item[\rm{(4)}] $\sigma_{s}(L)$ has enough co-primes and is a continuous lattice;
\item[\rm{(5)}] $\sigma_{s}(L)$ is completely distributive;
\item[\rm{(6)}] Both $\sigma_{s}(L)$ and $\sigma_{s}(L)^{\textrm{op}}$ are continuous.
\end{enumerate}
If $L$ is a complete semilattice, then these conditions are equivalent to
\begin{enumerate}
\item[\rm{(7)}] For each point $x\in L$, $\Uparrow_{s}\!x\in\sigma_{s}(L)$ and $x=\textrm{sup}\{\textrm{inf}\ \!U:x\in U\in\sigma_{s}(L)\}$.
\end{enumerate}
\end{theorem}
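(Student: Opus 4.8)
The plan is to organize the six conditions (1)--(6) around condition (5) together with the $C$-space characterization of strong continuity, and then to treat (7) separately using the complete semilattice hypothesis. First I would record that the lattice of open sets of $(L,\sigma_{s}(L))$ is exactly $\sigma_{s}(L)$, which, being a frame, is a distributive complete lattice. Since $(L,\sigma_{s}(L))$ is $T_{0}$ by Proposition~\ref{pro-4.1}, Proposition~\ref{pro-2.4} gives at once that $L$ is strongly continuous if and only if $\sigma_{s}(L)$ is completely distributive; this is $(1)\Leftrightarrow(5)$.

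With $(5)$ in hand, the block $(3),(4),(5),(6)$ follows from Theorem~\ref{the-2.6} applied to the complete lattice $\sigma_{s}(L)$. Because $\sigma_{s}(L)$ is automatically distributive, clause (2) of Theorem~\ref{the-2.6} reduces to ``both $\sigma_{s}(L)$ and $\sigma_{s}(L)^{\mathrm{op}}$ are continuous,'' giving $(5)\Leftrightarrow(6)$; clause (3) of Theorem~\ref{the-2.6} reads ``$\sigma_{s}(L)$ is continuous and every element is a sup of co-primes,'' i.e.\ $\sigma_{s}(L)$ has enough co-primes, giving $(5)\Leftrightarrow(4)$. Finally, by Proposition~\ref{pro-4.6}(1) the co-primes of $\sigma_{s}(L)$ are precisely the members of $\mathrm{SOFilt}(L)$, so ``$\sigma_{s}(L)$ has enough co-primes'' coincides with ``$\mathrm{SOFilt}(L)$ is a basis of $\sigma_{s}(L)$,'' which converts $(4)$ into $(3)$.

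For $(1)\Leftrightarrow(2)$ I would argue directly. If $L$ is strongly continuous, then $\Uparrow_{s}x\in\sigma_{s}(L)$ by Theorem~\ref{The-3.5} and $\{\Uparrow_{s}u:u\in L\}$ is a basis by Proposition~\ref{pro-4.3}, so each $U\in\sigma_{s}(L)$ equals $\bigcup\{\Uparrow_{s}u:u\in U\}$, using $\Uparrow_{s}u\subseteq\ \uparrow u\subseteq U$ for $u\in U$; this is $(2)$. Conversely, assuming $(2)$, given $x\in U\in\sigma_{s}(L)$ choose $u\in U$ with $x\in\Uparrow_{s}u$; then $\Uparrow_{s}u\in\sigma_{s}(L)$ and $x\in\Uparrow_{s}u\subseteq\mathrm{int}_{\sigma_{s}(L)}\uparrow u\subseteq\ \uparrow u\subseteq U$, so $(L,\sigma_{s}(L))$ is a $C$-space.

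The remaining equivalence $(1)\Leftrightarrow(7)$, under the assumption that $L$ is a complete semilattice (so that $\inf U$ exists for every nonempty $U$), is where the real work lies. For $(1)\Rightarrow(7)$, note $\Uparrow_{s}x\in\sigma_{s}(L)$ by Theorem~\ref{The-3.5}; moreover $\inf U\le x$ whenever $x\in U$, while for $y\ll_{s}x$ the open set $\Uparrow_{s}y$ contains $x$ and satisfies $\inf\Uparrow_{s}y\ge y$ (as $\Uparrow_{s}y\subseteq\ \uparrow y$), so $\sup\{\inf U:x\in U\in\sigma_{s}(L)\}\ge\sup\Downarrow_{s}x=x$, and the reverse inequality is clear. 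For the converse, the crucial claim is that \emph{for $U\in\sigma^{s}(L)$ with $x\in U$ one has $\inf U\ll_{s}x$}: indeed $\uparrow x\subseteq U$, so if $\uparrow\bigvee D\ \cap\uparrow a\subseteq\ \uparrow x\subseteq U$ then strong Scott openness of $U$ yields $d\in D$ with $\uparrow d\ \cap\uparrow a\subseteq U\subseteq\ \uparrow\inf U$. Replacing an arbitrary $U\in\sigma_{s}(L)$ by a basic $V\in\sigma^{s}(L)$ with $x\in V\subseteq U$ (so $\inf V\ge\inf U$), the set $S=\{\inf V:x\in V\in\sigma^{s}(L)\}$ lies in $\Downarrow_{s}x$, is directed because $\sigma^{s}(L)$ is closed under finite intersections, and satisfies $\sup S=x$. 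Applying the proposition stated just before Theorem~\ref{The-3.5} to the directed set $S\subseteq\Downarrow_{s}x$ shows $\Downarrow_{s}x$ is directed with supremum $x$; together with $\Uparrow_{s}x\in\sigma_{s}(L)$ this is exactly condition (2) of Theorem~\ref{The-3.5}, hence $L$ is strongly continuous. I expect the main obstacle to be this claim $\inf U\ll_{s}x$, together with the care needed in passing between the base $\sigma^{s}(L)$ and the topology $\sigma_{s}(L)$, since the claim genuinely uses the defining property (ii) of a strongly Scott open set and may fail for non-basic open sets.
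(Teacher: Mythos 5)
Your proposal is correct, and its overall skeleton coincides with the paper's: $(1)\Leftrightarrow(5)$ via Proposition~\ref{pro-2.4}, $(4)\Leftrightarrow(5)\Leftrightarrow(6)$ via Theorem~\ref{the-2.6}, $(3)\Leftrightarrow(4)$ via Proposition~\ref{pro-4.6}(1), $(1)\Leftrightarrow(2)$ handled directly, and $(7)\Rightarrow(1)$ via the key claim that $\inf U\ll_{s}x$ for $x\in U$ strongly Scott open --- which is exactly the paper's argument, except that the paper states the claim for arbitrary $U\in\sigma_{s}(L)$ and passes to a basic $V\in\sigma^{s}(L)$ with $x\in V\subseteq U$ inside the verification (so your caution that the claim ``may fail for non-basic open sets'' is unnecessary: $\uparrow d\cap\uparrow a\subseteq V\subseteq U\subseteq\ \uparrow\inf U$ still lands in $\uparrow\inf U$). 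You diverge from the paper in two localized places, both legitimately and arguably more cleanly. For $(2)\Rightarrow(1)$ you verify the $C$-space condition directly from $U=\bigcup\{\Uparrow_{s}u:u\in U\}$, whereas the paper shows $\Downarrow_{s}x$ is directed with supremum $x$ by a contradiction argument with $L\setminus\downarrow y$ and then invokes Theorem~\ref{The-3.5}; your route is shorter and avoids the detour through the order-theoretic characterization. For the entry into $(7)$ you prove $(1)\Rightarrow(7)$ directly, observing that $x\in\Uparrow_{s}y\subseteq\ \uparrow y$ gives $\inf\Uparrow_{s}y\geq y$ for each $y\ll_{s}x$, hence $\sup\{\inf U:x\in U\in\sigma_{s}(L)\}\geq\sup\Downarrow_{s}x=x$; the paper instead proves $(3)\Rightarrow(7)$ by a way-below argument in the continuous lattice $\sigma_{s}(L)$ using the open-filter basis. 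Your version buys a substantially more elementary argument for that implication; the paper's version illustrates how the frame-theoretic conditions can be used directly. Both are sound, so there is nothing to fix.
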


\begin{proof}
$(1)\Leftrightarrow(5)$: By Proposition \ref{pro-2.4} and Definition \ref{def-3.1}.\\
$(1)\Rightarrow(2)$: By Proposition \ref{pro-4.3} $(1)$.\\
$(2)\Rightarrow(1)$: Let $x\in L$. If $u\ll_{s}x$, $v\ll_{s}x$, then there exists $w\in\ \!\Uparrow_{s}\!u\ \!\cap\Uparrow_{s}\!v$ such that $x\in\ \!\Uparrow_{s}\!w$. Thus $\Downarrow_{s}\!x$ is directed. Set $y=\bigvee\Downarrow_{s}\!x\leq x$, if $y<x$, then $L\setminus\downarrow \!y$ is a strong Scott topology open neighborhood of $x$. Hence there exists $z\in L\setminus\downarrow \!y$ such that $x\in\ \!\Uparrow_{s}\!z$, then $z\ll_{s}x$, and thus $z\leq\bigvee\Downarrow_{s}\!x=y$, which is a contradiction.\\
$(3)\Leftrightarrow(4)$: By Proposition \ref{pro-4.6} $(1)$.\\
$(4)\Leftrightarrow(5)\Leftrightarrow(6)$: By Theorem \ref{the-2.6}.\\
$(3)\Rightarrow(7)$: Suppose that $L$ is a complete semilattice, then for each $U\in\sigma_{s}(L)$ and each $x\in U$, $\textrm{inf}\ \!U$ exists. Let $y=\textrm{sup}\{\textrm{inf}\ \!U:x\in U\in\sigma_{s}(L)\}$. Then $y\leq x$. Assume that $x\nleq y$, then $x\in L\setminus\downarrow y\in\sigma_{s}(L)$. Since $\sigma_{s}(L)$ is a continuous lattice, there exists $V\in\sigma_{s}(L)$ such that $x\in V\ll L\setminus\downarrow y$. And by the conditions, there exists $U\in\textrm{SOFilt}(L)$ such that $x\in U\subseteq V\ll L\setminus\downarrow y$. Note that $\textrm{inf}\ \!U\leq y$, hence $L\setminus\downarrow y\subseteq L\setminus\downarrow\textrm{inf}\ \!U=L\setminus\bigcap\{\downarrow u:u\in U\}=\bigcup\{L\setminus\downarrow u:u\in U\}$. Since $U$ is a filter, $\{L\setminus\downarrow u:u\in U\}$ form a directed family of strong Scott topology open sets. Since $U\ll L\setminus\downarrow y$, there is a $u\in U$ with $U\subseteq L\setminus\downarrow u$, which is a contradiction. Thus $x\leq y$. Since $(2)\Leftrightarrow(3)$, we have $\Uparrow_{s}\!x\in\sigma_{s}(L)$ for all $x\in L$.\\
$(7)\Rightarrow(1)$: We only need to show that for each $U\in\sigma_{s}(L)$ and each $x\in U$, $\textrm{inf}\ \!U\ll_{s}x$. For any directed set $D\subseteq L$ and $z\in L$, if $\uparrow\!\bigvee \!D\ \!\cap\uparrow \!z\subseteq\ \!\uparrow \!x$, since $x\in U\in\sigma_{s}(L)$, there exists $V\in\sigma^{s}(L)$ such that $x\in V\subseteq U$, and hence $\uparrow\!\bigvee \!D\ \!\cap\uparrow \!z\subseteq\ \!\uparrow \!x\subseteq V$. Thus there is a $d\in D$ with $\uparrow \!d\ \!\cap\uparrow \!z\subseteq V$, then $\uparrow \!d\ \!\cap\uparrow \!z\subseteq V\subseteq U\subseteq\ \!\uparrow\textrm{inf}\ \!U$. Therefore $\textrm{inf}~U\ll_{s}x$.
\end{proof}

\begin{corollary}
If $L$ is a complete semilattice, then $L$ is hypercontinuous iff $x=\sup \{\inf U:x\in U\in\upsilon(L)\}$ for all $x\in L$.
\end{corollary}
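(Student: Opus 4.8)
The plan is to prove the two implications separately. The forward implication will follow almost for free from the results already established, while the converse will be proved by mirroring the argument $(7)\Rightarrow(1)$ of Theorem~\ref{the4.9}, replacing the strong Scott topology and $\ll_{s}$ throughout by the upper topology and the relation $\prec$, and then reading off hypercontinuity directly from its definition.

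For the forward implication, I would assume $L$ is hypercontinuous. By Theorem~\ref{the3.9} this means precisely that $L$ is strongly continuous and $\upsilon(L)=\sigma_{s}(L)$. Since $L$ is a complete semilattice, the equivalence $(1)\Leftrightarrow(7)$ of Theorem~\ref{the4.9} then yields $x=\sup\{\inf U:x\in U\in\sigma_{s}(L)\}$ for all $x\in L$, and substituting $\upsilon(L)$ for $\sigma_{s}(L)$ gives the desired formula. Thus this half costs essentially nothing beyond the theorems already in hand.

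For the converse, I would assume $x=\sup\{\inf U:x\in U\in\upsilon(L)\}$ for every $x\in L$. The key observation, playing the role of $\inf U\ll_{s}x$ in the proof of Theorem~\ref{the4.9}, is that $x\in U\in\upsilon(L)$ implies $\inf U\prec x$: indeed $\inf U$ is a lower bound of $U$, so $U\subseteq\ \uparrow\!\inf U$, and since $U$ is a $\upsilon(L)$-open neighbourhood of $x$ this shows $x\in\textrm{int}_{\upsilon(L)}\!\uparrow\!\inf U$, i.e.\ $\inf U\prec x$. Consequently $\{\inf U:x\in U\in\upsilon(L)\}\subseteq\{y\in L:y\prec x\}$, so the hypothesis together with $\prec\ \subseteq\ \leq$ squeezes the supremum and gives $x=\sup\{y\in L:y\prec x\}$.

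It remains to verify that $\{y\in L:y\prec x\}$ is directed, which is the step I expect to be the crux and where the complete semilattice hypothesis is essential. Given $y_{1},y_{2}\prec x$, I would set $U=\textrm{int}_{\upsilon(L)}\!\uparrow\!y_{1}\cap\textrm{int}_{\upsilon(L)}\!\uparrow\!y_{2}$, a $\upsilon(L)$-open neighbourhood of $x$ contained in $\uparrow\!y_{1}\cap\uparrow\!y_{2}$; completeness guarantees that $\inf U$ exists, the containment forces $y_{1},y_{2}\leq\inf U$, and the observation above gives $\inf U\prec x$, so $\inf U$ is a common upper bound of $y_{1},y_{2}$ lying in $\{y\in L:y\prec x\}$. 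Nonemptiness is obtained for free by taking $U=L$, which yields $0=\inf L\prec x$. With both directedness and the supremum equation established, $L$ satisfies the definition of a hypercontinuous domain, completing the converse.
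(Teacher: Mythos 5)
Your proof is correct. The forward half is exactly the paper's route: Theorem \ref{the3.9} to identify $\upsilon(L)$ with $\sigma_{s}(L)$, then Theorem \ref{the4.9} $(1)\Leftrightarrow(7)$. For the converse, however, you do something slightly different and, I think, cleaner than what a literal reading of the paper's one-line citation suggests. Feeding the hypothesis into Theorem \ref{the4.9}$(7)$ directly is not immediate, because condition $(7)$ also demands $\Uparrow_{s}\!x\in\sigma_{s}(L)$, which the formula $x=\sup\{\inf U:x\in U\in\upsilon(L)\}$ does not hand you on its own; one would have to detour through $\prec\ \subseteq\ \ll_{s}$ and Lemma \ref{lem3.8} or similar. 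You instead transplant the $(7)\Rightarrow(1)$ mechanism to the upper topology: the observation $x\in U\in\upsilon(L)\Rightarrow\inf U\prec x$ (via $U\subseteq\ \uparrow\!\inf U$ and openness of $U$) is the correct analogue of $\inf U\ll_{s}x$, the squeeze gives $x=\sup\{y:y\prec x\}$, and directedness of $\{y:y\prec x\}$ is verified directly by intersecting the two interiors and taking the inf, which is where the complete semilattice hypothesis earns its keep. This lands you at the definition of hypercontinuity without passing through strong continuity at all, which both fills in the detail the paper elides and avoids the $\Uparrow_{s}\!x$ issue entirely. No gaps.
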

\begin{proof}
By Theorem \ref{the3.9} and Theorem \ref{the4.9}.
\end{proof}

Now we introduce the notation of strong Lawson topology and discuss the properties of strongly continuous domains endowed with the strong Lawson topology.

\begin{definition}
Let $L$ be a dcpo. The common refinement $\sigma_{s}(L)\vee\omega(L)$ of the strong Scott topology and the lower topology is called the \emph{strong Lawson topology} and is denoted by $\lambda_{s}(L)$.
\end{definition}

\begin{lemma}{\rm(\cite{GHK})}\label{lem4.12} Let $L$ be a complete semilattice. Then $\lambda(L)$ is a compact $T_{1}$ topology.
\end{lemma}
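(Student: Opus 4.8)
The plan is to prove the two assertions separately: first that $\lambda(L)$ is $T_{1}$, which is routine, and then that it is compact, which is the substantive part. Throughout I take $\lambda(L)=\sigma(L)\vee\omega(L)$, the common refinement of the Scott topology and the lower topology, and I recall that the subbasic open sets of $\omega(L)$ are the sets $L\setminus\uparrow \!x$.

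For the $T_{1}$ property I would show that every singleton is $\lambda(L)$-closed. Since $\lambda(L)$ refines both $\sigma(L)$ and $\omega(L)$, it suffices to note that $\downarrow \!x$ is Scott-closed (it is a lower set closed under directed sups) and that $\uparrow \!x$ is $\omega(L)$-closed (its complement $L\setminus\uparrow \!x$ is a subbasic open set of the lower topology). Hence $\{x\}=\uparrow \!x\cap\downarrow \!x$ is $\lambda(L)$-closed for every $x\in L$, so $\lambda(L)$ is $T_{1}$.

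For compactness I would use convergence of ultrafilters: a space is compact iff every ultrafilter on its underlying set converges. So let $\mathcal{U}$ be an ultrafilter on the set $L$. Because $L$ is a complete semilattice, $\inf F$ exists for every (necessarily nonempty) $F\in\mathcal{U}$, and the family $\{\inf F:F\in\mathcal{U}\}$ is directed: for $F,G\in\mathcal{U}$ we have $F\cap G\in\mathcal{U}$ with $\inf(F\cap G)\geq\inf F$ and $\inf(F\cap G)\geq\inf G$. Since $L$ is a dcpo, the supremum $q=\sup\{\inf F:F\in\mathcal{U}\}$ exists, and I claim $\mathcal{U}$ converges to $q$ in $\lambda(L)$. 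Because the Scott-open sets together with the sets $L\setminus\uparrow \!x$ ($x\in L$) form a subbase for $\lambda(L)$, and $\mathcal{U}$ is closed under finite intersections and supersets, it is enough to check that every subbasic open neighborhood of $q$ lies in $\mathcal{U}$.

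The two cases are the heart of the argument. If $U$ is Scott-open with $q\in U$, then since $q$ is the sup of the directed family $\{\inf F\}$, Scott-openness produces some $F_{0}\in\mathcal{U}$ with $\inf F_{0}\in U$; as $U$ is an upper set, $F_{0}\subseteq\ \uparrow\!\inf F_{0}\subseteq U$, whence $U\in\mathcal{U}$. If instead $L\setminus\uparrow \!x$ is a lower-topology subbasic open set with $q\in L\setminus\uparrow \!x$, i.e. $x\not\leq q$, I argue that $\uparrow \!x\notin\mathcal{U}$: otherwise $\uparrow \!x\in\mathcal{U}$ would make $x=\inf\uparrow \!x$ one of the elements of $\{\inf F:F\in\mathcal{U}\}$, forcing $x\leq q$, a contradiction; since $\mathcal{U}$ is an ultrafilter, $\uparrow \!x\notin\mathcal{U}$ yields $L\setminus\uparrow \!x\in\mathcal{U}$. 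Thus every subbasic neighborhood of $q$ belongs to $\mathcal{U}$, so $\mathcal{U}\to q$ and $(L,\lambda(L))$ is compact. I expect the main obstacle to be this compactness half, and within it the correct choice of the limit point $q$ together with the lower-topology case, where the completeness of the semilattice (ensuring that each $\inf F$ exists and that $\inf\uparrow \!x=x$) is precisely what makes the argument go through.
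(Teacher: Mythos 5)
Your proof is correct. Note first that the paper itself offers no proof of this lemma: it is imported verbatim from Gierz et al.\ (\emph{Continuous Lattices and Domains}, Lemma~III-1.9), so there is no in-paper argument to compare against. Your $T_1$ half ($\{x\}=\ \uparrow\!x\ \cap \downarrow\!x$ with $\downarrow\!x$ Scott-closed and $\uparrow\!x$ lower-closed) is exactly the standard one, and indeed is the same computation the authors do use later for $\lambda_s(L)$ in Theorem~\ref{the4.13}. For compactness, your ultrafilter argument is sound: $\{\inf F: F\in\mathcal{U}\}$ is directed because $\mathcal{U}$ is a filter, its sup $q$ exists because $L$ is a dcpo, the Scott-open case uses only that $q$ is a directed sup and that Scott-open sets are upper sets, and the lower-topology case correctly exploits $\inf\uparrow\!x=x$ together with the ultrafilter dichotomy; checking subbasic neighborhoods suffices since $\mathcal{U}$ is closed under finite intersections. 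This is the ``lim-inf convergence'' proof, which GHK develop in Section~III-3; their proof of III-1.9 itself instead goes through the Alexander subbase lemma, showing that a subbasic cover by Scott-open sets and sets $L\setminus\uparrow\!x_k$ admits a finite subcover by forming the sups $\bigvee\{x_k:k\in F\}$ (which exist in a complete semilattice) and applying Scott-openness to their directed join. The two routes are equivalent in strength (both rest on the ultrafilter lemma); yours has the advantage of exhibiting an explicit limit point for every ultrafilter, which is the same mechanism used to prove that $\lambda(L)$ makes $L$ a compact pospace.
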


\begin{theorem}\label{the4.13} Let $L$ be a complete semilattice. Then $\lambda_{s}(L)$ is a compact $T_{1}$ topology.
\end{theorem}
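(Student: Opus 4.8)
The plan is to split the claim into its two constituent assertions---that $\lambda_{s}(L)$ is compact and that it is $T_{1}$---and to settle each by reducing to material already in the excerpt, namely Lemma~\ref{lem4.12} and Proposition~\ref{pro-4.1}, without re-deriving anything about the complete-semilattice structure directly.

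For compactness, I would first record the inclusion $\lambda_{s}(L)\subseteq\lambda(L)$. Since the Proposition comparing the topologies gives $\sigma_{s}(L)\subseteq\sigma(L)$, and forming the common refinement with the fixed topology $\omega(L)$ is monotone, we get $\lambda_{s}(L)=\sigma_{s}(L)\vee\omega(L)\subseteq\sigma(L)\vee\omega(L)=\lambda(L)$. By Lemma~\ref{lem4.12}, $\lambda(L)$ is compact (this is where the complete-semilattice hypothesis is actually used), and compactness passes to any coarser topology: an arbitrary $\lambda_{s}(L)$-open cover of $L$ is \emph{a fortiori} a $\lambda(L)$-open cover, hence admits a finite subcover by the same sets, which is then a finite $\lambda_{s}(L)$-subcover. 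So $\lambda_{s}(L)$ is compact.

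For the $T_{1}$ property I would show that every singleton is $\lambda_{s}(L)$-closed, mirroring the classical argument for the ordinary Lawson topology. By Proposition~\ref{pro-4.1}(1), $\mathrm{cl}_{\sigma_{s}(L)}\{x\}=\ \downarrow\!x$, so $\downarrow\!x$ is closed in $\sigma_{s}(L)$ and hence in the finer topology $\lambda_{s}(L)$. On the other hand, by the very definition of the lower topology, each $\uparrow\!x$ is a subbasic closed set of $\omega(L)$, and therefore also $\lambda_{s}(L)$-closed. Consequently $\{x\}=\ \downarrow\!x\ \cap\uparrow\!x$ is an intersection of two $\lambda_{s}(L)$-closed sets, hence $\lambda_{s}(L)$-closed, which yields the $T_{1}$ axiom.

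I do not expect a serious obstacle here: the whole argument is short, and the only points needing care are the monotonicity inclusion $\lambda_{s}(L)\subseteq\lambda(L)$ (resting on $\sigma_{s}(L)\subseteq\sigma(L)$) and the identity $\{x\}=\ \downarrow\!x\ \cap\uparrow\!x$, with $\downarrow\!x$ closed in the strong Scott topology and $\uparrow\!x$ closed in the lower topology. It is worth noting that the $T_{1}$ half in fact holds for any dcpo, and that strong continuity of $L$ is nowhere needed; the complete-semilattice assumption enters only to make Lemma~\ref{lem4.12} applicable for the compactness half.
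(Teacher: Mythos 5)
Your proposal is correct and follows essentially the same route as the paper: compactness via Lemma~\ref{lem4.12} (the paper leaves the coarsening step $\lambda_{s}(L)\subseteq\lambda(L)$ implicit, which you rightly spell out), and the $T_{1}$ property via the decomposition $\{x\}=\ \downarrow\!x\ \cap\uparrow\!x$ with $\downarrow\!x$ strong-Scott closed and $\uparrow\!x$ lower-topology closed. Your closing remark that the $T_{1}$ half needs only a dcpo is also accurate.
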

\begin{proof}
By Lemma \ref{lem4.12}, $(L,\lambda_{s}(L))$ is compact. For each $x\in L$, $\{x\}=\ \!\downarrow \!x\ \!\cap\uparrow \!x$. Since $\downarrow \!x$ is a strong Scott topology closed set and $\uparrow \!x$ is a lower topology closed set, $\{x\}$ is a strong Lawson topology closed set. Thus $(L,\lambda_{s}(L))$ is a $T_{1}$ space.
\end{proof}

\begin{theorem}\label{the4.14} Let $L$ be a strongly continuous domain. Then $(L,\lambda_{s}(L))$ is a $T_{2}$ space.
\end{theorem}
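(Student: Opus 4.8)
The plan is to exhibit, for any two distinct points of $L$, disjoint strong Lawson open sets separating them, each built from a single element $u$ that is strongly way-below one point but fails to lie below the other. First I would assemble the two ingredients that strong continuity and the definition of $\lambda_{s}(L)$ provide. By Theorem~\ref{The-3.5} (equivalently Proposition~\ref{pro-4.3}), for a strongly continuous $L$ each set $\Uparrow_{s}\!u$ is strongly Scott open, so $\Uparrow_{s}\!u\in\sigma_{s}(L)\subseteq\lambda_{s}(L)$; and each set $L\setminus\uparrow \!u$ is a subbasic open set of the lower topology, so $L\setminus\uparrow \!u\in\omega(L)\subseteq\lambda_{s}(L)$. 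Thus both kinds of set are available as strong Lawson open neighbourhoods.

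Given $x\neq y$, by antisymmetry of the specialization order I may assume without loss of generality that $x\nleq y$. Since $L$ is strongly continuous, $\Downarrow_{s}\!x$ is directed and $x=\sup\Downarrow_{s}\!x$. The key observation is that not every element of $\Downarrow_{s}\!x$ can lie below $y$: if it did, $y$ would be an upper bound of $\Downarrow_{s}\!x$ and we would get $x=\sup\Downarrow_{s}\!x\leq y$, contradicting $x\nleq y$. Hence there is a $u$ with $u\ll_{s}x$ and $u\nleq y$.

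With this $u$ fixed, I would set $U=\Uparrow_{s}\!u$ and $V=L\setminus\uparrow \!u$. Then $x\in U$ since $u\ll_{s}x$, and $y\in V$ since $u\nleq y$ means $y\notin\uparrow \!u$. Disjointness follows because $\ll_{s}\ \subseteq\ \leq$ gives $\Uparrow_{s}\!u\subseteq\ \uparrow \!u$, whence $U\cap V=\Uparrow_{s}\!u\cap(L\setminus\uparrow \!u)=\emptyset$. As $U,V\in\lambda_{s}(L)$, these are disjoint strong Lawson open neighbourhoods of $x$ and $y$, which establishes the $T_{2}$ property.

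The argument is short, and I do not anticipate a genuine obstacle; the only step requiring care is the middle one, the selection of $u$, where strong continuity is used exactly to guarantee enough elements strongly way-below $x$ (with directed $\Downarrow_{s}\!x$ and $x=\sup\Downarrow_{s}\!x$) so that one of them can be chosen to escape $\downarrow \!y$. Everything else is bookkeeping with the already-established facts that $\Uparrow_{s}\!u$ is strong Scott open and that $\Uparrow_{s}\!u\subseteq\ \uparrow \!u$.
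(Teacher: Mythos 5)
Your proposal is correct and follows essentially the same route as the paper's proof: pick $u\ll_{s}x$ with $u\nleq y$ and separate with $\Uparrow_{s}\!u$ and $L\setminus\uparrow \!u$. You merely spell out the justification (via $x=\sup\Downarrow_{s}\!x$) for the existence of such a $u$, which the paper leaves implicit.
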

\begin{proof}
Let $x,y\in L$, and $x\neq y$, assume that $x\nleq y$, then there exists $u\ll_{s}x$, such that $u\nleq y$. Thus $x\in\ \!\Uparrow_{s}\!u\in\sigma_{s}(L)$, $y\in L\setminus\uparrow \!u\in\omega(L)$, and $\Uparrow_{s}\!u\cap(L\setminus\uparrow \!u)=\emptyset$. Therefore $(L,\lambda_{s}(L))$ is a $T_{2}$ space.
\end{proof}

\begin{corollary}
Let $L$ is a strongly continuous complete semilattice. Then $\lambda_{s}(L)$ is compact and Hausdorff.
\end{corollary}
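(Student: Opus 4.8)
The plan is to recognize that this corollary is an immediate consequence of the two preceding results, so the entire argument amounts to verifying that the hypotheses of both Theorem~\ref{the4.13} and Theorem~\ref{the4.14} are simultaneously satisfied, and then conjoining their conclusions. First I would observe that a strongly continuous complete semilattice $L$ is in particular a complete semilattice: by the definition of a complete semilattice, $L$ is a dcpo in which every nonempty subset has an infimum, and ``strongly continuous'' means precisely that $L$ is a strongly continuous domain in the sense of Definition~\ref{def-3.1}. Thus $L$ meets the hypothesis of Theorem~\ref{the4.13} (being a complete semilattice) as well as the hypothesis of Theorem~\ref{the4.14} (being a strongly continuous domain).

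With this observation in hand, the two separation-and-compactness properties are extracted directly. From Theorem~\ref{the4.13} I would obtain that $\lambda_{s}(L)$ is a compact $T_{1}$ topology; in particular $(L,\lambda_{s}(L))$ is compact. From Theorem~\ref{the4.14} I would obtain that $(L,\lambda_{s}(L))$ is a $T_{2}$ space, i.e. Hausdorff. Since compactness and the Hausdorff property have now each been established for the same topology $\lambda_{s}(L)$, their conjunction yields that $\lambda_{s}(L)$ is compact and Hausdorff, which is exactly the assertion. Note that the $T_{1}$ conclusion of Theorem~\ref{the4.13} is subsumed by the stronger $T_{2}$ conclusion of Theorem~\ref{the4.14}, so only compactness is genuinely needed from the former.

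There is essentially no obstacle in this argument: the only point requiring a moment's care is the unpacking of the compound hypothesis ``strongly continuous complete semilattice'' into the two separate conditions that the earlier theorems require, and confirming that a complete semilattice is indeed a dcpo so that the notion of strong continuity applies. Once that bookkeeping is done, the result follows by pure logical conjunction of the quoted theorems, with no further computation.
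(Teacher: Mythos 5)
Your proposal is correct and follows exactly the paper's own argument, which simply cites Theorem~\ref{the4.13} for compactness and Theorem~\ref{the4.14} for the Hausdorff property. The only difference is that you spell out the routine unpacking of the hypotheses, which the paper leaves implicit.
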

\begin{proof}
By Theorem \ref{the4.13} and Theorem \ref{the4.14}.
\end{proof}

Recall that the definition of Scott-continuous functions. For a function $f$ from a dcpo $P$ into a dcpo $Q$, if $f$ is continuous with respect to the Scott topologies, that is, $f^{-1}(U)\in\sigma(P)$ for all $U\in\sigma(Q)$, then $f$ is called a \emph{Scott-continuous function}.\\
\indent A function $f$ is called \emph{strongly Scott-continuous} if it is continuous with respect to the strong Scott topologies. Next we discuss the properties of strongly Scott-continuous functions.

\begin{proposition} Let $P$, $Q$ be dcpos and $f: P\rightarrow Q$. Consider the following conditions:
\begin{enumerate}
\item[\rm(1)] $f: (P,\sigma_{s}(P))\rightarrow (Q,\sigma_{s}(Q))$ is continuous;
\item[\rm(2)] $f^{-1}(U)\in\sigma_s(P)$ for all $U\in\sigma^{s}(Q)$;
\item[\rm(3)] $f^{-1}(U)\in\sigma^{s}(P)$ for all $U\in\sigma^{s}(Q)$;
\item[\rm(4)] For each directed set $D\subseteq P$ and $x\in P$, $\uparrow \!f(\bigcap_{d\in D}\uparrow \!d \ \!\cap\uparrow \!x)=\bigcap_{d\in D}\uparrow \!f(d) \ \!\cap\uparrow \!f(x)$.
\end{enumerate}
Then $(4)\Rightarrow (3)\Rightarrow (2)\Leftrightarrow (1)$; if both $P$ and $Q$ are sup semilattices and $f$ preserves finite sups, then $(1)\Rightarrow (4)$, and the four conditions are equivalent.
\end{proposition}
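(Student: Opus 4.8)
The plan is to prove the unconditional chain $(4)\Rightarrow(3)\Rightarrow(2)\Leftrightarrow(1)$ first, and then to add the implication $(1)\Rightarrow(4)$ under the semilattice hypotheses, which closes the loop into an equivalence. The two easy links are almost formal. For $(3)\Rightarrow(2)$ I would simply note that $\sigma^{s}(P)\subseteq\sigma_{s}(P)$, so any preimage lying in $\sigma^{s}(P)$ lies in $\sigma_{s}(P)$. For $(1)\Leftrightarrow(2)$ I would use that $\sigma^{s}(Q)$ is a \emph{base} for $\sigma_{s}(Q)$: continuity of $f$ immediately forces $f^{-1}(U)\in\sigma_{s}(P)$ for the base members $U\in\sigma^{s}(Q)$, which is exactly $(2)$; conversely, writing an arbitrary $V\in\sigma_{s}(Q)$ as a union of base members and using that $f^{-1}$ commutes with unions and that $\sigma_{s}(P)$ is a topology recovers $(1)$.

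For $(4)\Rightarrow(3)$ I would first extract monotonicity of $f$ from $(4)$: taking $D=\{a\}$ and $x=b$ with $a\le b$, one has $\bigcap_{d\in D}\uparrow d\cap\uparrow x=\uparrow b$, so $(4)$ reads $\uparrow f(\uparrow b)=\uparrow f(a)\cap\uparrow f(b)$; since $f(b)$ lies in the left-hand side it lies in $\uparrow f(a)$, i.e. $f(a)\le f(b)$. Monotonicity makes $f^{-1}(U)$ an upper set whenever $U=\uparrow U$ and makes $f(D)$ a directed subset of $Q$. To verify the second clause of strong Scott openness for $f^{-1}(U)$, suppose $\bigcap_{d\in D}\uparrow d\cap\uparrow x\subseteq f^{-1}(U)$; since $U=\uparrow U$ this is equivalent to $\uparrow f(\bigcap_{d\in D}\uparrow d\cap\uparrow x)\subseteq U$, which by $(4)$ equals $\bigcap_{d\in D}\uparrow f(d)\cap\uparrow f(x)\subseteq U$. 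Applying the definition of $U\in\sigma^{s}(Q)$ to the directed set $f(D)$ and the point $f(x)$ yields some $d_{0}\in D$ with $\uparrow f(d_{0})\cap\uparrow f(x)\subseteq U$, and monotonicity then gives $\uparrow d_{0}\cap\uparrow x\subseteq f^{-1}(U)$, as required.

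The substantive direction is $(1)\Rightarrow(4)$, whose crux is a preliminary observation: on any sup semilattice that is a dcpo one has $\sigma_{s}=\sigma$. Indeed $\sigma_{s}\subseteq\sigma$ holds in general, and for the reverse, given a Scott-open $U$ with $\uparrow\bigvee D\cap\uparrow x\subseteq U$, the sup-semilattice identity $\uparrow\bigvee D\cap\uparrow x=\uparrow((\bigvee D)\vee x)$ together with $(\bigvee D)\vee x=\bigvee_{d\in D}(d\vee x)$ over the directed set $\{d\vee x:d\in D\}$ lets Scott-openness produce a single $d$ with $\uparrow d\cap\uparrow x\subseteq U$, so $U\in\sigma^{s}\subseteq\sigma_{s}$. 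Hence for sup semilattices $P,Q$ strong Scott continuity coincides with Scott continuity, so $(1)$ forces $f$ to preserve directed sups. Combining this with the assumed preservation of finite sups and with monotonicity (which follows from $a\vee b=b\Rightarrow f(a)\vee f(b)=f(b)$), I would reduce both sides of $(4)$ to a common value: setting $m=(\bigvee D)\vee x$, the left side becomes $\uparrow f(\uparrow m)=\uparrow f(m)=\uparrow(f(\bigvee D)\vee f(x))=\uparrow(\bigvee f(D)\vee f(x))$, while the right side is $\uparrow\bigvee f(D)\cap\uparrow f(x)=\uparrow(\bigvee f(D)\vee f(x))$, and the two coincide.

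The hard part is exactly this last step: extracting directed-sup preservation from $(1)$. Strong Scott continuity is a priori strictly weaker than Scott continuity, so without further hypotheses there is no reason for $f$ to commute with directed joins, and then the left and right sides of $(4)$ need not agree. The semilattice hypotheses enter precisely to collapse this gap through the identity $\sigma_{s}=\sigma$ on both $P$ and $Q$; the assumed preservation of finite sups is then what converts $\bigvee f(D)\vee f(x)$ into $f((\bigvee D)\vee x)$ and forces the two sides of $(4)$ to meet. I expect the bookkeeping with the sup-semilattice identities $\uparrow a\cap\uparrow b=\uparrow(a\vee b)$ and $\bigcap_{d}\uparrow d=\uparrow\bigvee D$ to be the only place where care is needed, but none of it is deep once $\sigma_{s}=\sigma$ is in hand.
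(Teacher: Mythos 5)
Your proof is correct and follows essentially the same route as the paper's: monotonicity extracted from (4) with a singleton directed set, the base argument for $(2)\Leftrightarrow(1)$ and $(3)\Rightarrow(2)$, and the reduction of both sides of (4) to $\uparrow\bigl(\bigvee f(D)\vee f(x)\bigr)$ via the identity $\uparrow a\cap\uparrow b=\uparrow(a\vee b)$. The only difference is that you explicitly prove $\sigma_{s}=\sigma$ on sup semilattices (and hence that (1) yields preservation of directed sups), a fact the paper asserts without proof; that is a worthwhile detail to include.
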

\begin{proof}
$(1)\Leftrightarrow (2)$: Trivial.\\
$(4)\Rightarrow (3)$: First we show that $f$ is order preserving. Suppose that $x\leq y$. Let $D=\{y\}$. Then $\uparrow \!f(\uparrow \!y\ \!\cap\uparrow \!x)=\ \!\uparrow \!f(y)\ \!\cap\uparrow \!f(x)$. Hence $\uparrow \!f(\uparrow \!y\ \!\cap\uparrow \!x)=\ \!\uparrow \!f(\uparrow \!y)$, then $f(y)\in\ \!\uparrow \!f(y)\ \!\cap\uparrow \!f(x)\subseteq\ \!\uparrow \!f(x)$. Thus $f(x)\leq f(y)$. For each $U\in\sigma^{s}(Q)$, since $f$ is order preserving, $f^{-1}(U)=\ \!\uparrow \!f^{-1}(U)$. Suppose directed set $D\subseteq P$ and $x\in P$ such that $\bigcap_{d\in D}\uparrow \!d\ \!\cap\uparrow \!x\subseteq f^{-1}(U)$, then $f(\bigcap_{d\in D}\uparrow \!d\ \!\cap\uparrow \!x)\subseteq U$, hence $\bigcap_{d\in D}\uparrow \!f(d)\ \!\cap\uparrow \!f(x)=\ \!\uparrow \!f(\bigcap_{d\in D}\uparrow \!d\ \!\cap\uparrow \!x)\subseteq U$. Since $U\in\sigma^{s}(Q)$, there is a $d\in D$ with $\uparrow \!f(d)\ \!\cap\uparrow \!f(x)\subseteq U$. Thus $\uparrow \!d\ \!\cap\uparrow \!x\subseteq f^{-1}(U)$. Therefore $f^{-1}(U)\in\sigma^{s}(P)$.\\
$(3)\Rightarrow (1)$: For each $U\in\sigma_{s}(Q)$, there exists $\{U_{i}:i\in I\}\subseteq\sigma^{s}(Q)$ such that $U=\bigcup_{i\in I}U_{i}$ by the definition of $\sigma_{s}(Q)$. Then $f^{-1}(U)=f^{-1}(\bigcup_{i\in I}U_{i})=\bigcup_{i\in I}f^{-1}(U_{i})$. By precondition $(2)$, $f^{-1}(U_{i})\in\sigma^{s}(P)$ for all $i\in I$. Thus $f^{-1}(U)\in\sigma_{s}(P)$.\\
$(1)\Rightarrow (4)$: Suppose that both $P$ and $Q$ are sup semilattices and $f$ preserves finite sups, then $\sigma_{s}(P)=\sigma(P), \sigma_{s}(Q)=\sigma(Q)$. For each directed set $D\subseteq P$ and $x\in P$, we have $\bigcap_{d\in D}\!\uparrow \!d\ \!\cap\uparrow \!x=\bigcap_{d\in D}\uparrow\!(d\vee x)=\ \!\uparrow\!\bigvee_{d\in D}(d\vee x)$, thus $\uparrow \!f(\bigcap_{d\in D}\!\uparrow \!d\ \!\cap\uparrow \!x)=\ \!\uparrow \!f(\bigvee_{d\in D}(d\vee x))=\ \!\uparrow \!\bigvee_{d\in D}f(d\vee x)=\ \!\uparrow \!\bigvee_{d\in D}f(d)\vee f(x)=\bigcap_{d\in D}\uparrow\!(f(d)\vee f(x))=\bigcap_{d\in D}\uparrow \!f(d)\ \!\cap\uparrow \!f(x)$.
\end{proof}

\begin{proposition}
Let $P$ be a strongly continuous domain. Both $f: (P,\sigma_{s}(P))\rightarrow (Q,\sigma_{s}(Q))$ and $g: (Q,\sigma_{s}(Q))\rightarrow (P,\sigma_{s}(P))$ are continuous functions, and $f\circ g=\mathbf{1}_{Q}$. Then $Q$ is a strongly continuous domain.
\end{proposition}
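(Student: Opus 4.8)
The plan is to reduce everything to the $C$-space characterization of strong continuity in Definition \ref{def-3.1}: since $Q$ is a dcpo (the topology $\sigma_{s}(Q)$ is already presupposed), it suffices to prove that $(Q,\sigma_{s}(Q))$ is a $C$-space. The identity $f\circ g=\mathbf{1}_{Q}$ exhibits $Q$ as a topological retract of $P$, with $g$ the section and $f$ the retraction, and the strategy is to transport the $C$-space witnesses from $P$ to $Q$ along these two maps. Two preliminary facts will be used throughout. First, by Proposition \ref{pro-4.1}(1) the specialization order of $\sigma_{s}(Q)$ (and of $\sigma_{s}(P)$) coincides with the original dcpo order, so the continuous maps $f$ and $g$ are monotone. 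Second, every $U\in\sigma_{s}(Q)$ is an upper set.

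First I would fix $V\in\sigma_{s}(Q)$ and $y\in V$ and produce a candidate witness. Because $f\circ g=\mathbf{1}_{Q}$, we have $f(g(y))=y\in V$, so $g(y)\in f^{-1}(V)$; and $f^{-1}(V)\in\sigma_{s}(P)$ since $f$ is continuous. As $P$ is strongly continuous, $(P,\sigma_{s}(P))$ is a $C$-space, so there is $p\in f^{-1}(V)$ with $g(y)\in\textrm{int}_{\sigma_{s}(P)}\!\uparrow \!p\subseteq\ \!\uparrow \!p\subseteq f^{-1}(V)$. I then set $w=f(p)$; from $p\in f^{-1}(V)$ we get $w=f(p)\in V$, so $w$ is the candidate witness lying in $V$.

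Next I would verify the two $C$-space inclusions for $w$. The inclusion $\uparrow \!w\subseteq V$ is immediate, since $w\in V$ and $V$ is an upper set. For $y\in\textrm{int}_{\sigma_{s}(Q)}\!\uparrow \!w$, put $W=\textrm{int}_{\sigma_{s}(P)}\!\uparrow \!p\in\sigma_{s}(P)$. Since $g$ is continuous, $g^{-1}(W)\in\sigma_{s}(Q)$, and $g(y)\in W$ gives $y\in g^{-1}(W)$. The key claim is $g^{-1}(W)\subseteq\ \!\uparrow \!w$: if $q\in g^{-1}(W)$ then $g(q)\in W\subseteq\ \!\uparrow \!p$, so $g(q)\geq p$, and monotonicity of $f$ together with $f\circ g=\mathbf{1}_{Q}$ yields $q=f(g(q))\geq f(p)=w$. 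Hence $g^{-1}(W)$ is a strong Scott open neighbourhood of $y$ contained in $\uparrow \!w$, so $y\in\textrm{int}_{\sigma_{s}(Q)}\!\uparrow \!w\subseteq\ \!\uparrow \!w\subseteq V$, which is exactly the $C$-space condition.

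The main obstacle is the key claim $g^{-1}(W)\subseteq\ \!\uparrow \!w$, and I expect the whole argument to stand or fall on it: it is precisely where the section--retraction identity $f\circ g=\mathbf{1}_{Q}$ and the monotonicity of $f$ must be combined, and where one must be sure that the specialization order induced by $\sigma_{s}$ is the ambient order so that monotonicity of the continuous maps is available. Everything else---openness of $f^{-1}(V)$ and $g^{-1}(W)$, the triviality of $\uparrow \!w\subseteq V$, and the final assembly---is routine once these ingredients are in place, and no appeal to the order-theoretic characterization of Theorem \ref{The-3.5} is needed. A cleaner but essentially equivalent packaging would observe that $g^{-1}\circ f^{-1}=\mathbf{1}_{\mathcal{O}(Q)}$ realizes $\mathcal{O}(Q,\sigma_{s}(Q))$ as a join-preserving retract of the completely distributive lattice $\mathcal{O}(P,\sigma_{s}(P))$ (Proposition \ref{pro-2.4}); I would, however, keep the pointwise version above, since it produces the $C$-space witness $w$ explicitly.
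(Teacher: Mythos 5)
Your proposal is correct and follows essentially the same route as the paper's proof: pull $V$ back along $f$, apply the $C$-space property of $(P,\sigma_{s}(P))$ to get a witness $p$, and push forward via $g^{-1}$ and $w=f(p)$, using $f\circ g=\mathbf{1}_{Q}$ and monotonicity to get $g^{-1}(\textrm{int}_{\sigma_{s}(P)}\!\uparrow\!p)\subseteq\ \!\uparrow\!w$. You merely spell out the inclusion $g^{-1}(W)\subseteq\ \!\uparrow\!w$ and the coincidence of the specialization order with the ambient order, which the paper leaves implicit.
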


\begin{proof}
Let $U\in\sigma_{s}(Q)$ and $y\in U$. Then $g(y)\in f^{-1}(U)$. Since $f$ is continuous, $f^{-1}(U)\in\sigma_{s}(P)$. Since $P$ is a strongly continuous domain, there exists $u\in P$ such that $g(y)\in \textmd{int}_{\sigma_{s}(P)}\!\uparrow \!u\subseteq\ \uparrow \!u\subseteq f^{-1}(U)$, thus $y\in g^{-1}(\textmd{int}_{\sigma_{s}(P)}\!\uparrow \!u)\subseteq\ \uparrow \!f(u)\subseteq U$. Since $g$ is continuous, $g^{-1}(\textmd{int}_{\sigma_{s}(P)}\!\uparrow \!u)\in\sigma_{s}(Q)$. Hence $y\in \textmd{int}_{\sigma_{s}(Q)}\!\uparrow \!f(u)\subseteq\ \uparrow \!f(u)\subseteq U$. Therefore $Q$ is a strongly continuous domain.
\end{proof}

\end{document}